\newcommand{\la}{\langle}
\newcommand{\ra}{\rangle}
\renewcommand{\Re}{\operatorname{Re}}
\renewcommand{\Im}{\operatorname{Im}}
\newtheorem{theorem}{Theorem}
\newtheorem{proposition}[theorem]{Proposition}
\newtheorem{lemma}[theorem]{Lemma}
\newtheorem{corollary}[theorem]{Corollary}
\theoremstyle{remark}
\newtheorem{remark}[theorem]{Remark}
\numberwithin{equation}{section}
\numberwithin{theorem}{section}
\numberwithin{table}{section}
\numberwithin{figure}{section}
\title[LWP for NLS with Potentials]{Local-in-time Well-posedness for Nonlinear Schr\"odinger Equations with Potentials}
\date{\today}
\author{Younghun Hong}
\address{Brown University}
\begin{document}

\maketitle
\section{Introduction}
In this note, we prove the local-in-time well-posedness (LWP) and the mass and energy conservation laws for a 3d cubic nonlinear Schr\"odinger equation with a real-valued potential $V(x)$:
\begin{equation}\tag{$\textup{NLS}_V$}
iu_t+\Delta u-Vu\pm|u|^2u=0;\ u(0)=u_0\in H^s,
\end{equation}
by a contraction mapping argument. The main tools are Strichartz estimates (Lemma 3.4), derived from the dispersive estimate of Beceanu and Goldberg \cite{BG}, and the norm equivalence between the standard Sobolev norm and the Sobolev norm associated with a Schr\"odinger operator $\mathcal{H}=-\Delta+V$ (Lemma 3.2).

\subsection{Potential classes}
We say that a real-valued function $V$ is contained in \textit{Kato class} if
$$\lim_{r\to0+}\sup_{x\in\mathbb{R}^3}\int_{|x-y|\leq r}\frac{|V(y)|}{|x-y|}dy=0,$$
and it is in \textit{global Kato class} $\mathcal{K}$ if its \textit{global Kato norm} 
$$\|V\|_{\mathcal{K}}:=\sup_{x\in\mathbb{R}^3} \int_{\mathbb{R}^3}\frac{|V(y)|}{|x-y|}dy$$
is finite. Let $C_c^b$ be the set of bounded, compactly supported functions. Define the potential class $\mathcal{K}_0$ as the norm closure of $C_c^b$ within the \textit{global Kato norm}. The weak $L^{3/2}$-space is denoted by $L^{3/2,\infty}$.

\subsection{Resolvent} Let $\sigma(\mathcal{H})$ be the spectrum of $\mathcal{H}$. For $z\in\sigma(\mathcal{H})$, we define the resolvent operator by $R_V(z)=(\mathcal{H}-z)^{-1}$. Recall the free resolvent formula
\begin{equation}
(R_0(z)f)(x)=((-\Delta-z)^{-1}f)(x)=\int_{\mathbb{R}^3}\frac{e^{\sqrt{-z}|x-y|}}{4\pi|x-y|}f(y) dy,\ z\in\mathbb{C}\setminus[0,+\infty).
\end{equation}

\subsection{Acknowledgement} The author would like to thank Vedran Sohinger for explaining that \cite[Theorem 1.2.5]{Ca} is useful to show persistence of regularity.

\section{Spectral Properties}
We begin by reviewing the spectral properties of Schr\"odinger operators $\mathcal{H}=-\Delta+V$.  We will show that under suitable assumptions, $(a)$ $\mathcal{H}$ is self-adjoint on $L^2$ with domain $\mathfrak{h}^2$, which is dense in $H^1$; $(b)$ the spectrum $\sigma(\mathcal{H})$ is purely absolutely on the positive real-line $[0,+\infty)$ and has at most finitely many negative eigenvalues.

\begin{lemma}[Quadratic form \cite{DP}]
Let $V$ be in Kato class, and define the quadratic form by
$$q(u,v):=\int_{\mathbb{R}^3} u\overline{\mathcal{H}v}dx.$$
Then there exists $a\gg1$ such that
$$\frac{1}{2}\|\nabla u\|_{L^2}^2-a\|u\|_{L^2}^2\leq q(u,u)\leq \frac{3}{2}\|\nabla u\|_{L^2}^2+a\|u\|_{L^2}^2$$
for all $u\in H^1$. Thus, $q$ is lower semi-bounded with form domain $H^1$.
\end{lemma}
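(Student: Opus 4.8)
The plan is to derive the two-sided bound from an \emph{infinitesimal form bound} for $V$ relative to $-\Delta$, which I will extract from the free resolvent kernel and the Kato hypothesis. First I would observe that for $u,v\in C_c^\infty(\mathbb{R}^3)$ integration by parts turns the defining formula into $q(u,v)=\int\nabla u\cdot\overline{\nabla v}\,dx+\int Vu\bar v\,dx$, so $q(u,u)=\|\nabla u\|_{L^2}^2+\int V|u|^2\,dx$; hence everything reduces to showing that for each $\epsilon>0$ there is $C_\epsilon<\infty$ with
\[
\int_{\mathbb{R}^3}|V(y)|\,|u(y)|^2\,dy\ \le\ \epsilon\,\|\nabla u\|_{L^2}^2+C_\epsilon\,\|u\|_{L^2}^2\qquad(u\in H^1).
\]
Taking $\epsilon=\tfrac12$ and $a=C_{1/2}$ (which will turn out to be $\gg1$) yields both displayed inequalities, and this estimate also gives $\int|V||u|^2<\infty$ throughout $H^1$, so the integration-by-parts formula provides the continuous extension of $q$ to $H^1$.

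To prove the form bound I would work at a large negative energy $-\mu$. Put $f=(-\Delta+\mu)^{1/2}u$, so that $\|f\|_{L^2}^2=\|\nabla u\|_{L^2}^2+\mu\|u\|_{L^2}^2$, and truncate $V$ to $V_n:=\min(|V|,n)\,\mathbf{1}_{\{|x|\le n\}}$ to keep every operator bounded (passing to the limit by monotone convergence at the end). Then
\[
\int V_n|u|^2\,dx=\big\|V_n^{1/2}(-\Delta+\mu)^{-1/2}f\big\|_{L^2}^2\le\big\|V_n^{1/2}(-\Delta+\mu)^{-1}V_n^{1/2}\big\|_{L^2\to L^2}\,\|f\|_{L^2}^2 ,
\]
using $\|A\|^2=\|AA^*\|$. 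By the free resolvent formula of Section 1 (at $z=-\mu$), whose kernel is $\frac{e^{-\sqrt\mu|x-y|}}{4\pi|x-y|}$, the operator in the middle has the nonnegative, symmetric kernel $V_n(x)^{1/2}\,\frac{e^{-\sqrt\mu|x-y|}}{4\pi|x-y|}\,V_n(y)^{1/2}$, and Schur's test with weight $|V|^{1/2}$ bounds its norm by
\[
M(\mu):=\sup_{x\in\mathbb{R}^3}\int_{\mathbb{R}^3}\frac{e^{-\sqrt\mu|x-y|}}{4\pi|x-y|}\,|V(y)|\,dy .
\]

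The heart of the matter is that $M(\mu)\to0$ as $\mu\to\infty$, and this is where the full Kato condition is used. I would split the integral at $|x-y|=r$: the piece over $|x-y|\le r$ is at most $\tfrac1{4\pi}\sup_x\int_{|x-y|\le r}\tfrac{|V(y)|}{|x-y|}\,dy$, which tends to $0$ as $r\to0+$ by hypothesis; for the piece over $|x-y|>r$ I would use the (standard) consequence of the Kato condition that $\sup_x\int_{|x-y|\le1}|V(y)|\,dy<\infty$, together with the monotonicity bound $\frac{e^{-\sqrt\mu t}}{t}\le\frac{e^{-\sqrt\mu r/2}}{r}e^{-\sqrt\mu t/2}$ for $t\ge r$ and a decomposition into unit shells, to get a bound of the form $C_r\,e^{-c\sqrt\mu r}\to0$. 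So, given $\epsilon$, one first fixes $r$ making the near part $<\epsilon/2$ and then $\mu=\mu(\epsilon)\gg1$ making the far part $<\epsilon/2$, which gives the form bound with $C_\epsilon=\epsilon\,\mu(\epsilon)$. I expect this two-parameter estimate---balancing the $r\to0$ in the Kato condition against the $\mu\to\infty$ decay of the resolvent kernel---to be the only real work; a cruder argument using only finiteness of the local integral would produce a relative form bound, but not one below $\tfrac12$.

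Finally, with $\epsilon=\tfrac12$ I get $\tfrac12\|\nabla u\|_{L^2}^2-a\|u\|_{L^2}^2\le q(u,u)\le\tfrac32\|\nabla u\|_{L^2}^2+a\|u\|_{L^2}^2$ with $a=\tfrac12\mu(\tfrac12)\gg1$. In particular $q(u,u)\ge-a\|u\|_{L^2}^2$, so $q$ is lower semi-bounded, and $q(u,u)+(a+1)\|u\|_{L^2}^2$ is comparable to $\|u\|_{H^1}^2$; since $C_c^\infty$ is dense in $H^1$ and $q$ is $H^1$-continuous (Cauchy--Schwarz against the form bound), the closed form generated by $q$ has form domain exactly $H^1$.
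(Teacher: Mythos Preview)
Your proposal is correct and follows essentially the same route as the paper: reduce to the infinitesimal form bound, rewrite $\int|V|\,|u|^2$ via $|V|^{1/2}(-\Delta+\mu)^{-1/2}$ and the $TT^*$ identity, use a Schur/H\"older argument to bound the operator norm by $M(\mu)=\sup_x\int\frac{e^{-\sqrt{\mu}|x-y|}}{4\pi|x-y|}|V(y)|\,dy$, and then show $M(\mu)\to0$ by splitting near/far with the Kato condition controlling the near part and exponential decay handling the far part. The only cosmetic difference is that the paper organizes the far region into dyadic shells covered by balls of the small radius $r_0$ coming from the Kato condition, whereas you decompose into unit shells and invoke the (equivalent) uniform local $L^1$ bound $\sup_x\int_{|x-y|\le1}|V|\,dy<\infty$; both yield the same conclusion.
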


\begin{proof}
Let $a$ be a large number to be chosen later. We claim that 
\begin{equation}
\||V|^{\frac{1}{2}}(-\Delta+2a)^{-\frac{1}{2}} u\|_{L^2}\leq \frac{1}{2}\|u\|_{L^2}^2,
\end{equation}
which is, by the standard $TT^*$ argument with $T=|V|^{\frac{1}{2}}(-\Delta+2a)^{-\frac{1}{2}}$, equivalent to
\begin{equation}
\||V|^{\frac{1}{2}}R_0(-2a)|V|^{\frac{1}{2}}u\|_{L^2}\leq \frac{1}{2} \|u\|_{L^2}.
\end{equation}
Note that $(2.2)$ follows once we prove that 
\begin{equation}
\sup_{x\in\mathbb{R}^3}\int_{\mathbb{R}^3}\frac{|V(y)|e^{-\sqrt{2a}|x-y|}}{4\pi|x-y|} dy\leq\frac{1}{2},
\end{equation}
since by the free resolvent formula $(1.1)$ and the H\"older inequality, the left hand side of $(2.2)$ is less than 
\begin{align*}
&\Big\{\int_{\mathbb{R}^3} |V(x)|\Big|\int_{\mathbb{R}^3}\frac{e^{-\sqrt{2a}|x-y|}}{4\pi|x-y|}|V(y)|^{\frac{1}{2}}u(y) dy\Big|^2dx\Big\}^{\frac{1}{2}}\\
&\leq\Big\{\int_{\mathbb{R}^3} |V(x)|\Big(\int_{\mathbb{R}^3}\frac{|V(y)|e^{-\sqrt{2a}|x-y|}}{4\pi|x-y|} dy\Big)\Big(\int_{\mathbb{R}^3}\frac{e^{-\sqrt{2a}|x-y|}}{4\pi|x-y|}|u(y)|^2 dy\Big)dx\Big\}^{\frac{1}{2}}\\
&\leq\Big\{\frac{1}{2}\int_{\mathbb{R}_y^3} \Big(\int_{\mathbb{R}_x^3}\frac{|V(x)|e^{-\sqrt{2a}|x-y|}}{4\pi|x-y|}dx\Big)^2|u(y)|^2 dy\Big\}^{\frac{1}{2}}\leq\frac{1}{2}\|u\|_{L^2}.
\end{align*}
To show $(2.3)$, we choose $r_0>0$ such that
$$\sup_{x\in\mathbb{R}^3}\int_{|x-y|\leq r_0}\frac{|V(y)|e^{-\sqrt{2a}|x-y|}}{4\pi|x-y|} dy\leq\sup_{x\in\mathbb{R}^3}\int_{|x-y|\leq r_0}\frac{|V(y)|}{4\pi|x-y|} dy\leq\frac{1}{4},$$
in particular,
\begin{equation}
\int_{|y|\leq r_0}\frac{|V(y)|e^{-\sqrt{2a}|y|}}{4\pi|y|} dy\leq\frac{1}{4}.
\end{equation}
Let $I_k=B(0, 2^{k+1}r_0)\setminus B(0, 2^kr_0)$. Since each $I_k$ is covered by $O(2^k)$-many balls $B(x_j,r_0)$, we have
\begin{align*}
\int_{I_k}\frac{|V(y)|e^{-\sqrt{2a}|y|}}{4\pi|y|} dy&\leq e^{-\sqrt{2a} 2^kr_0}\int_{I_k}\frac{|V(y)|}{4\pi|y|} dy\leq e^{-\sqrt{2a} 2^kr_0}\sum_{j=1}^{O(2^k)}\int_{B(x_j,r_0)}\frac{|V(y)|}{4\pi|y|} dy\\
&\sim e^{-\sqrt{2a} 2^kr_0}\sum_{j=1}^{O(2^k)}\int_{B(x_j,r_0)}\frac{|V(y)|}{|x_j-y|} dy\lesssim e^{-\sqrt{2a}2^kr_0}2^k.
\end{align*}
Now we choose $a\gg1$ depending on $r_0$ so that 
\begin{equation}
\int_{|y|\geq r_0}\frac{|V(y)|e^{-\sqrt{2a}|y|}}{4\pi|y|} dy\leq\sum_{k=0}^\infty\int_{I_k}\frac{|V(y)|e^{-\sqrt{2a}|y|}}{4\pi|y|} dy\lesssim\sum_{k=0}^\infty e^{-\sqrt{2a}2^kr_0}2^k\leq\frac{1}{4}.
\end{equation}
Combining $(2.4)$ and $(2.5)$, we obtain $(2.3)$.
\end{proof}

\begin{remark}
Replacing $(2.3)$ by
$$\sup_{x\in\mathbb{R}^3}\int_{\mathbb{R}^3}\frac{|V(y)|}{4\pi|x-y|} dy\leq\frac{\|V\|_{\mathcal{K}}}{4\pi},$$
in the proof of Lemma 2.1, one can show that 
$$\int_{\mathbb{R}^3}V|u|^2dx\leq\frac{\|V\|_{\mathcal{K}}}{4\pi}\|\nabla u\|_{L^2}^2.$$
Hence, if the negative part of a potential, denoted by $V_-$, is small, precisely, $\|V_-\|_{\mathcal{K}}\leq4\pi$, then the quadratic form $q$ is positive-definite:
$$q(u,u)=\int_{\mathbb{R}^3} u\overline{\mathcal{H}u}dx\geq \int_{\mathbb{R}^3}|\nabla u|^2 dx+\int_{\mathbb{R}^3}V_-|u|^2dx\geq\Big(1-\frac{\|V_-\|_{\mathcal{K}}}{4\pi}\Big)\|\nabla u\|_{L^2}^2\geq0.$$
\end{remark}

\begin{lemma}[Self-adjointness]
If $V$ is in Kato class, then $\mathcal{H}$ is self-adjoint on $L^2$ with domain $\mathfrak{H}^2:=(\mathcal{H}+a+1)^{-1}(L^2)$.
\end{lemma}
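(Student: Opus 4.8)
The plan is to obtain $\mathcal{H}$ as the self-adjoint operator associated, via the KLMN theorem, to the lower semi-bounded quadratic form $q$ of Lemma 2.1, and then to read off its domain. The form $q$ is densely defined (its form domain $H^1$ is dense in $L^2$) and, by Lemma 2.1, lower semi-bounded; what is needed to invoke KLMN is that the perturbation $v\mapsto\int V|v|^2$ is relatively form-bounded with bound strictly below $1$.

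First I would observe that inequality $(2.1)$ is exactly this relative form bound: substituting $v=(-\Delta+2a)^{-1/2}u$ turns $(2.1)$ into
$$\int_{\mathbb{R}^3}|V(x)|\,|v(x)|^2\,dx\le\tfrac12\bigl(\|\nabla v\|_{L^2}^2+2a\|v\|_{L^2}^2\bigr),\qquad v\in H^1,$$
so the symmetric perturbation is $(-\Delta)$-form-bounded with relative bound $\le\tfrac12<1$. By the KLMN theorem the closed symmetric form $q(u,u)=\|\nabla u\|_{L^2}^2+\int V|u|^2$ with form domain $H^1$ is bounded below (indeed $q\ge-a$ by Lemma 2.1), and there is a \emph{unique} self-adjoint operator $\mathcal{H}$ with form domain $H^1$ such that $q(u,v)=\langle\mathcal{H}u,v\rangle_{L^2}$ for all $u\in D(\mathcal{H})$ and $v\in H^1$.

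I would then identify $D(\mathcal{H})$ with $\mathfrak{H}^2$. Since $\mathcal{H}\ge-a$, the operator $\mathcal{H}+a+1\ge1$ is a bijection from $D(\mathcal{H})$ onto $L^2$ with bounded everywhere-defined inverse, so $D(\mathcal{H})=D(\mathcal{H}+a+1)=\operatorname{Ran}\bigl((\mathcal{H}+a+1)^{-1}\bigr)=(\mathcal{H}+a+1)^{-1}(L^2)=\mathfrak{H}^2$, and self-adjointness is part of the KLMN conclusion. Density of $\mathfrak{H}^2$ in $H^1$ (asserted in the section preamble) is then automatic, because the operator domain of a self-adjoint operator is dense in its form domain for the form norm, and here the form norm $\sqrt{q(u,u)+(a+1)\|u\|_{L^2}^2}$ is equivalent to $\|u\|_{H^1}$ by Lemma 2.1.

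I do not expect a genuine obstacle: the only substantive hypothesis of KLMN is the relative form bound below $1$, which is precisely $(2.1)$, already proved. If one prefers to avoid quoting KLMN, an equivalent hands-on route is available: factor $V=|V|^{1/2}\operatorname{sgn}(V)|V|^{1/2}$ so that $B:=(-\Delta+2a)^{-1/2}V(-\Delta+2a)^{-1/2}$ is a bounded self-adjoint operator with $\|B\|\le\tfrac12$ by $(2.1)$; then $1+B\ge\tfrac12$ is boundedly invertible, one \emph{defines} $\mathcal{H}+2a:=(-\Delta+2a)^{1/2}(1+B)(-\Delta+2a)^{1/2}$ — manifestly self-adjoint, with bounded inverse $(-\Delta+2a)^{-1/2}(1+B)^{-1}(-\Delta+2a)^{-1/2}$ — and checks that it agrees with $-\Delta+V$ on $C_c^\infty(\mathbb{R}^3)$ and that its domain is $(\mathcal{H}+a+1)^{-1}(L^2)$.
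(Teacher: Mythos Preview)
Your argument is correct. Both your proof and the paper's rest on the same input, namely the relative form bound $(2.1)$ (equivalently Lemma 2.1), but you package the conclusion differently. You invoke KLMN as a black box to produce the self-adjoint operator and then read off $D(\mathcal{H})=(\mathcal{H}+a+1)^{-1}(L^2)$ from the spectral lower bound; the paper instead carries out the Friedrichs-type construction by hand, showing that $(1+a+\mathcal{H}):H^1\to H^{-1}$ is a quasi-isometry and obtaining surjectivity via the Riesz representation theorem for the equivalent inner product $(u,v)_1=q(u,v)+(a+1)\langle u,v\rangle$, then deducing self-adjointness of the bounded inverse on $L^2$. Your route is quicker and cleaner if one is willing to quote KLMN; the paper's is more self-contained and makes the $H^1\leftrightarrow H^{-1}$ isomorphism explicit, which is what feeds directly into Remark 2.4. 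Your alternative ``hands-on'' factorization $(-\Delta+2a)^{1/2}(1+B)(-\Delta+2a)^{1/2}$ is yet a third variant, closer in spirit to the paper's but using operator rather than form language. One cosmetic point: squaring $(2.1)$ gives relative bound $\tfrac14$ rather than $\tfrac12$, but of course either is below $1$ and the argument is unaffected.
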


\begin{proof}
Following \cite[Theorem VIII.15]{ReSi1}, we aim to show that $(1+a+\mathcal{H}):H^1\to H^{-1}$ is a quasi-isometry. Indeed, if it is true, the inverse $(1+a+\mathcal{H})^{-1}:H^{-1}\to H^1$ exists, and $(1+a+\mathcal{H})^{-1}: L^2\to \mathfrak{H}^2\subset L^2$ is self-adjoint. Hence, by functional calculus, $((1+a+\mathcal{H})^{-1})^{-1}=(1+a+\mathcal{H})$ is self-adjoint with domain $\mathfrak{H}^2$, and so is $\mathcal{H}$.

By $(2.1)$ and the $T^*T$ argument, $\|(1+a+\mathcal{H})u\|_{H^{-1}}\sim\|u\|_{H^1}$. For surjectivity, we introduce the norm $\|u\|_1:=(q(u,u)+(a+1)\|u\|_{L^2}^2)^{1/2}$, which is, by $(2.1)$, comparable to $\|u\|_{H^1}$. Polarizing $\|\cdot\|_1$, we get a new $H^1$-inner product $(u,v)_1:=q(u,v)+(a+1)\int_{\mathbb{R}^3}u\bar{v}dx$. For $f\in H^{-1}$, define a linear functional $\ell: H^1\to\mathbb{C}$ by $\ell(v)=\int_{\mathbb{R}^3} v\overline{f} dx$. Then it follows from the Riesz lemma with inner product $(\cdot,\cdot)_1$ that there exists $u\in H^1$ such that
$$\ell_f(v)=\int_{\mathbb{R}^3} v\overline{f} dx=(v,u)_1=\int_{\mathbb{R}^3}v\overline{(1+a+\mathcal{H})u}dx$$
for all $v\in H^1$, which implies $(1+a+\mathcal{H})u=f$.
\end{proof}

\begin{remark}
$(i)$ Since $(1+a+\mathcal{H})^{-1}$ is a quasi-isometry and $L^2$ is dense in $H^{-1}$, $\mathfrak{H}^2$ is a dense subset of $H^1$. In general, $\mathfrak{H}^2\neq H^2$ \cite{Sh}. $(ii)$ $\mathfrak{H}^2$ is a Hilbert space with inner product $(u,v)_2=\int_{\mathbb{R}^3} (1+a+\mathcal{H})u\overline{(1+a+\mathcal{H})v}dx$.
\end{remark}

\begin{lemma}[Essential spectrum]
If $V\in\mathcal{K}_0$, then $\sigma_\textup{ess}(\mathcal{H})=[0,+\infty)$.
\end{lemma}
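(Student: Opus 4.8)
The plan is to show that the resolvents of $\mathcal{H}$ and $-\Delta$ differ by a compact operator; Weyl's theorem on the invariance of the essential spectrum under compact perturbations of the resolvent then gives $\sigma_\textup{ess}(\mathcal{H})=\sigma_\textup{ess}(-\Delta)=[0,+\infty)$, the last equality because $-\Delta$ on $L^2(\mathbb{R}^3)$ has purely absolutely continuous spectrum $[0,+\infty)$. Fix $a\gg1$ as in Lemma 2.1, so that $-2a$ lies in the resolvent set of both $-\Delta$ and $\mathcal{H}$, and write $v=|V|^{1/2}$, $w=\operatorname{sgn}(V)\,|V|^{1/2}$ as self-adjoint multiplication operators, so that $V=vw=wv$. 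From $(2.2)$ and the positivity of $R_0(-2a)$ one gets $\|wR_0(-2a)v\|\le\tfrac12$, hence $1+wR_0(-2a)v$ is boundedly invertible and the symmetrized resolvent identity
$$R_V(-2a)-R_0(-2a)=-\,R_0(-2a)\,v\,\bigl(1+wR_0(-2a)v\bigr)^{-1}\,w\,R_0(-2a)$$
holds (the standard resolvent identity for form-bounded perturbations of relative bound $<1$). In this factorization $R_0(-2a)v$ and $\bigl(1+wR_0(-2a)v\bigr)^{-1}$ are bounded on $L^2$, so it suffices to prove that $wR_0(-2a)=\bigl[w(-\Delta+2a)^{-1/2}\bigr](-\Delta+2a)^{-1/2}$ is compact; since $(-\Delta+2a)^{-1/2}$ is bounded and $w(-\Delta+2a)^{-1/2}=\operatorname{sgn}(V)\cdot|V|^{1/2}(-\Delta+2a)^{-1/2}$, everything reduces to proving that $|V|^{1/2}(-\Delta+2a)^{-1/2}$ is compact on $L^2(\mathbb{R}^3)$.

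This compactness I would prove by density of $C_c^b$ in $\mathcal{K}_0$. For $V\in C_c^b$ with $\operatorname{supp}V\subset K$, choose $\chi\in C_c^\infty(\mathbb{R}^3)$ with $\chi\equiv1$ on $K$ and $\operatorname{supp}\chi$ inside a ball $B$. Then $(-\Delta+2a)^{-1/2}$ maps $L^2$ boundedly into $H^1$, multiplication by $\chi$ maps $H^1$ boundedly into $H_0^1(B)$, and $H_0^1(B)\hookrightarrow L^2$ is compact by the Rellich--Kondrachov theorem; hence $\chi(-\Delta+2a)^{-1/2}$ is compact, and so is $|V|^{1/2}(-\Delta+2a)^{-1/2}=|V|^{1/2}\,\chi\,(-\Delta+2a)^{-1/2}$, a bounded multiplication operator composed with a compact one. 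For general $V\in\mathcal{K}_0$ pick $V_n\in C_c^b$ with $\|V-V_n\|_{\mathcal{K}}\to0$; using $\bigl|\,|V|^{1/2}-|V_n|^{1/2}\,\bigr|^2\le|V-V_n|$ pointwise, the $TT^*$ identity, and the kernel estimate from the proof of Lemma 2.1 (applied with the Kato-class function $\bigl(|V|^{1/2}-|V_n|^{1/2}\bigr)^2$ in place of $|V|$), one obtains
$$\bigl\|\bigl(|V|^{1/2}-|V_n|^{1/2}\bigr)(-\Delta+2a)^{-1/2}\bigr\|^2=\bigl\|\bigl(|V|^{1/2}-|V_n|^{1/2}\bigr)R_0(-2a)\bigl(|V|^{1/2}-|V_n|^{1/2}\bigr)\bigr\|\le\frac{\|V-V_n\|_{\mathcal{K}}}{4\pi}\longrightarrow0,$$
so $|V|^{1/2}(-\Delta+2a)^{-1/2}$ is a norm limit of compact operators, hence compact. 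Combining, $wR_0(-2a)$ is compact, so $R_V(-2a)-R_0(-2a)$ is compact, and the statement follows from Weyl's theorem since $\mathcal{H}$ (Lemma 2.2) and $-\Delta$ are self-adjoint.

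I expect the compactness of $|V|^{1/2}(-\Delta+2a)^{-1/2}$ to be the crux. In $\mathbb{R}^3$ the kernel of $(-\Delta+2a)^{-1/2}$ is $\sim|x-y|^{-2}$ near the diagonal and hence not square-integrable there, so $|V|^{1/2}(-\Delta+2a)^{-1/2}$ is \emph{not} Hilbert--Schmidt even for bounded compactly supported $V$; compactness must be extracted from the Rellich embedding (equivalently, from a high-frequency cutoff making the operator Hilbert--Schmidt) rather than from a direct kernel computation. The other ingredients — the resolvent identity, the density argument, and the application of Weyl's theorem — are routine given Lemma 2.1, Lemma 2.2, and standard spectral theory.
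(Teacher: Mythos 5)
Your proposal is correct and follows essentially the same route as the paper: Weyl's theorem reduces the claim to compactness of the resolvent difference, the symmetrized resolvent identity (which the paper writes as a Neumann series in $T_{V_1,V_2}$ rather than in closed form) reduces that to compactness of $|V|^{1/2}\operatorname{sgn}(V)R_0$, and this is obtained by Kato-norm approximation with $C_c^b$ potentials, Rellich compactness for the compactly supported piece, and the $TT^*$/Kato-norm bound for the error. The only differences are cosmetic (choice of spectral parameter, closed-form inverse versus Neumann series, and your more detailed justification of the Rellich step).
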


\begin{proof}
Let $\lambda=-(1+a)$. Since $\mathcal{H}$ is semi-bounded, by Weyl's essential spectrum theorem \cite[Theorem XIII.14]{ReSi2}, it suffices to check that $R_V(\lambda)-R_0(\lambda)$ is compact on $L^2$. We denote $T_{V_1,V_2}(\lambda):=V_1R_0(\lambda)V_2$, where $V_1=|V|^{1/2}\textup{sign} (V)$ and $V_2=|V|^{1/2}$. Indeed, by $(2.2)$,
$$R_V(\lambda)-R_0(\lambda)=R_0(\lambda)((I+VR_0(\lambda))^{-1}-I)=-\sum_{n=0}^\infty R_0(\lambda)V_2(-T_{V_1,V_2}(\lambda))^n V_1R_0(\lambda)$$
is bounded on $L^2$. It remains to show that $V_1R_0(\lambda)$ is compact on $L^2$. For $\epsilon>0$, choose $V_\epsilon\in C_c^b$ such that $\|V-V_\epsilon\|_{\mathcal{K}}\leq \epsilon$ and $\textup{sign}(V)=\textup{sign}(V_\epsilon)$. By Remark 2.2, we have
$$\|(V_1-V_{\epsilon, 1})R_0(\lambda)f\|_{L^2}\leq\frac{\|V-V_\epsilon\|_{\mathcal{K}}}{4\pi}\|(-\Delta-\lambda)^{-\frac{1}{2}}f\|_{L^2}\lesssim\epsilon\|f\|_{L^2}.$$
Moreover, since $V_{\epsilon,1}$ is compactly supported, it follows from the Rellich compactness theorem that $V_{\epsilon,1}R_0(\lambda)$ is compact on $L^2$. Since $\epsilon$ is arbitrary, we conclude that $V_1R_0(\lambda)$ is compact.
\end{proof}

We call $z\in\mathbb{C}$ a \textit{resonance} of $\mathcal{H}$ if $(I+VR_0(z))$ is not invertible $\mathcal{L}(L^1)$.

\begin{lemma}[Negative eigenvalues] If $V\in\mathcal{K}_0$ and zero is not a resonance, then $\mathcal{H}$ has at most finitely many negative eigenvalues. 
\end{lemma}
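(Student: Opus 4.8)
The plan is to combine the Birman--Schwinger principle with a zero-energy (threshold) analysis; the hypothesis that $0$ is not a resonance will be used only to prevent the negative eigenvalues from accumulating at the bottom $0$ of the essential spectrum. \emph{Reduction.} By Lemma 2.1 we have $\mathcal H\ge-a$, and by Lemma 2.5 we have $\sigma_{\mathrm{ess}}(\mathcal H)=[0,+\infty)$; hence $\sigma(\mathcal H)\cap(-\infty,0)$ is the discrete spectrum, consisting of isolated eigenvalues of finite multiplicity contained in $[-a,0)$, whose only possible accumulation point is $0$. Consequently $\mathcal H$ has infinitely many negative eigenvalues (counted with multiplicity) if and only if there is a sequence of distinct eigenvalues $E_n\to0$. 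I argue by contradiction and assume such a sequence, with $\mathcal H\psi_n=E_n\psi_n$, $\psi_n\in\mathfrak{H}^2\subset H^1$, $\|\psi_n\|_{L^2}=1$.

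\emph{Birman--Schwinger operators, up to the threshold.} With $V_1=|V|^{1/2}\operatorname{sign}V$ and $V_2=|V|^{1/2}$ as in the proof of Lemma 2.5, and $R_0(0):=(-\Delta)^{-1}$, put $K_E:=V_2R_0(E)V_1$ for $E\le0$. The key technical point I would establish is that $\{K_E\}_{E\le0}$ is a uniformly bounded, norm-continuous family of \emph{compact} operators on $L^2$, \emph{including the endpoint} $K_0=V_2(-\Delta)^{-1}V_1$. This is proved exactly as in Lemma 2.5, by approximating $V$ in the $\mathcal{K}$-norm by $V_\epsilon\in C_c^b$ with $\operatorname{sign}V_\epsilon=\operatorname{sign}V$: factoring $R_0(E)=(-\Delta+|E|)^{-1/2}(-\Delta+|E|)^{-1/2}$ and invoking the bound $\int|V||u|^2\le\tfrac{\|V\|_{\mathcal{K}}}{4\pi}\|\nabla u\|_{L^2}^2$ of Remark 2.2 (which bounds $\||V|^{1/2}(-\Delta+|E|)^{-1/2}\|_{\mathcal{L}(L^2)}$ uniformly in $E\le0$ and stays valid at $E=0$ since $\|\nabla(-\Delta)^{-1/2}f\|_{L^2}=\|f\|_{L^2}$), the tail $K_E-V_{\epsilon,2}R_0(E)V_{\epsilon,1}$ is $O(\epsilon^{1/2})$ uniformly in $E$; whereas for fixed $\epsilon$ the operator $V_{\epsilon,2}R_0(E)V_{\epsilon,1}$ is Hilbert--Schmidt with kernel $\tfrac{e^{-\sqrt{|E|}\,|x-y|}V_{\epsilon,2}(x)V_{\epsilon,1}(y)}{4\pi|x-y|}$, dominated uniformly in $E\le0$ by the function $\tfrac{|V_\epsilon(x)V_\epsilon(y)|^{1/2}}{4\pi|x-y|}$, which lies in $L^2(dx\,dy)$ because $|x-y|^{-2}\in L^1_{\mathrm{loc}}(\mathbb{R}^3)$ and $V_\epsilon$ is bounded with compact support; hence $E\mapsto V_{\epsilon,2}R_0(E)V_{\epsilon,1}$ is continuous in Hilbert--Schmidt norm by dominated convergence. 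Now, from $(-\Delta-E_n)\psi_n=-V\psi_n=-V_1\phi_n$, where $\phi_n:=|V|^{1/2}\psi_n\in L^2$ (finite by Remark 2.2, and $\phi_n\not\equiv0$, since $V\psi_n\equiv0$ would force $-\Delta\psi_n=E_n\psi_n$ with $E_n<0$), I obtain the Birman--Schwinger identity $\phi_n=-K_{E_n}\phi_n$; normalize so that $\|\phi_n\|_{L^2}=1$.

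\emph{Passage to the limit and contradiction.} Since $\{\phi_n\}$ is bounded in $L^2$ and $K_0$ is compact, a subsequence satisfies $K_0\phi_{n_k}\to\eta$; then $\phi_{n_k}=-K_0\phi_{n_k}-(K_{E_{n_k}}-K_0)\phi_{n_k}\to-\eta=:\phi$ in $L^2$ (by norm-continuity and uniform boundedness of the family), so $\|\phi\|_{L^2}=1$ and, in the limit, $(I+V_2R_0(0)V_1)\phi=0$. The function $\psi:=-R_0(0)V_1\phi$ then solves $(-\Delta+V)\psi=0$ distributionally with $\psi\not\equiv0$ (one checks $-\Delta\psi=-V_1\phi$, and, since $\phi$ is supported on $\{V\ne0\}$, that $V\psi=V_1\phi$; moreover $\psi\equiv0$ would force $\phi\equiv0$). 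Equivalently, $\ker(I+V_2R_0(0)V_1)\ne\{0\}$ on $L^2$. By the standard equivalence, for $V\in\mathcal{K}_0$, between this $L^2$ kernel condition and the non-invertibility of $I+VR_0(0)$ on $L^1$ (the Birman--Schwinger factorization together with the Fredholm alternative and $(V_2R_0(0)V_1)^*=V_1R_0(0)V_2$; cf.\ \cite{BG}), this says precisely that $0$ is a resonance of $\mathcal H$ --- contradicting the hypothesis. Hence $\mathcal H$ has only finitely many negative eigenvalues.

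I expect the main obstacle to be the threshold step: extending compactness and norm-continuity of $K_E$ all the way to the zero-energy free resolvent $(-\Delta)^{-1}$, where $(-\Delta)^{-1/2}$ is unbounded on $L^2$ so that the Kato-norm bound of Remark 2.2 --- not Sobolev embedding --- must carry the estimate, and, relatedly, making precise the equivalence between the paper's $L^1$-definition of a resonance and the $L^2$ condition $\ker(I+V_2R_0(0)V_1)=\{0\}$. A shorter route that sidesteps the resonance hypothesis entirely would be to note $N(\mathcal H)\le N(-\Delta-W)$ by the min-max principle, where $W\ge0$ is the negative part of $V$ (so $W\in\mathcal{K}_0$), and then bound $N(-\Delta-W)$ by the finite number of eigenvalues $\ge1$ of the compact positive operator $W^{1/2}(-\Delta)^{-1}W^{1/2}$ via the monotone Birman--Schwinger principle (using $R_0(E)\le R_0(0)$ for $E<0$); since the lemma is phrased with ``$0$ not a resonance,'' I have organized the proof around that hypothesis.
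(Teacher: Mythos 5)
Your argument is essentially correct, but it follows a genuinely different route from the paper's. The paper never leaves $L^1$: it shows (i) every negative eigenvalue $\lambda$ produces a nonzero element of $\ker_{L^1}(I+VR_0(\lambda))$, i.e.\ is a resonance in the paper's sense, and (ii) since $I+VR_0(0)$ is invertible on $L^1$ by hypothesis, a Neumann-series perturbation using $\|V(R_0(\lambda)-R_0(0))\|_{\mathcal{L}(L^1)}\to0$ shows $I+VR_0(\lambda)$ stays invertible for $\lambda<0$ near $0$, so resonances (hence eigenvalues) cannot accumulate at the threshold. You instead run the classical $L^2$ Birman--Schwinger scheme: compactness and norm-continuity of $K_E=V_2R_0(E)V_1$ up to and including $E=0$ (your $C_c^b$-approximation plus Hilbert--Schmidt argument for this is correct, and the uniform-in-$E$ tail bound via Remark 2.2 is exactly the right tool at the threshold), followed by extraction of a normalized $\phi\in\ker_{L^2}(I+K_0)$ from a sequence $E_n\to0^-$. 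What your route buys is the observation, which you make explicitly, that the monotone Birman--Schwinger bound $N(\mathcal H)\le n(1;V_-^{1/2}(-\Delta)^{-1}V_-^{1/2})<\infty$ gives finiteness for every $V\in\mathcal K_0$ with no resonance hypothesis at all --- a genuinely stronger statement than the lemma. What the paper's route buys is that it never has to reconcile two different notions of ``zero-energy resonance.''

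That reconciliation is the one real soft spot in your write-up. Having produced $\phi\in L^2$ with $(I+V_2R_0(0)V_1)\phi=0$, the natural candidate for an element of $\ker_{L^1}(I+VR_0(0))$ is $V_1\phi$, but $\|V_1\phi\|_{L^1}\le\||V|^{1/2}\|_{L^2}\|\phi\|_{L^2}$ requires $V\in L^1$, which does \emph{not} follow from $V\in\mathcal K_0$ (the global Kato norm controls $\int_{B(x,1)}|V|$ uniformly in $x$ but not the sum over a cover of $\mathbb R^3$). So the implication ``$\ker_{L^2}(I+K_0)\ne\{0\}\Rightarrow I+VR_0(0)$ not invertible on $L^1$'' is a genuine lemma, not a formal intertwining; it is established in \cite{BG} for this potential class, and citing it is defensible, but within this paper it is the one step of your proof that is not self-contained, and it is precisely the step the paper's all-$L^1$ argument is designed to avoid. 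If you want a version that stays inside the paper's framework, either adopt the paper's step (i) to convert eigenfunctions directly into $L^1$ resonance functions and then run your threshold-compactness argument on $I+VR_0(\lambda)$ in $\mathcal L(L^1)$, or simply promote your ``shorter route'' to the main proof.
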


\begin{proof}
Since eigenvalues are discrete, it suffices to exclude the scenario that negative eigenvalues are accumulated to zero. First, we claim that negative eigenvalues are resonances. Let $\psi\in \mathfrak{H}^2$ be an eigenfunction corresponding to a negative eigenvalue $\lambda$. For $\epsilon>0$, we choose $V_\epsilon \in C_c^b$ such that $\|V-V_\epsilon\|_{\mathcal{K}}<\epsilon$ and thus $\|(V-V_\epsilon)R_0(\lambda)|\|_{L^1\to L^1}\leq\frac{\epsilon}{4\pi}$. Set
$$\varphi:=-(I+(V-V_\epsilon)R_0(\lambda))^{-1}V_\epsilon R_0(\lambda)V\psi,$$
which is formally equivalent to 
$$(I+(V-V_\epsilon)R_0(\lambda))\varphi=-V_\epsilon R_0(\lambda)V\psi\Leftrightarrow(I+VR_0(\lambda))\varphi=0.$$
Thus,
\begin{align*}
\|\varphi\|_{L^1}&\leq\|(I+(V-V_\epsilon)R_0(\lambda))^{-1}V_\epsilon R_0(\lambda)V\psi\|_{L^1}\lesssim\|V_\epsilon R_0(\lambda)V\psi\|_{L^1}\\
&\lesssim\|V_\epsilon\|_{L^2} \|R_0(\lambda)V\psi\|_{L^2}\lesssim\|R_0(\lambda)|V|^{1/2}\textup{sign}(V)\|_{L^2\to L^2}\||V|(-\Delta)^{-\frac{1}{2}}(-\Delta)^{\frac{1}{2}}\psi\|_{L^2}\\
&\lesssim\||V|(-\Delta)^{-\frac{1}{2}}\|_{L^2\to L^2}\|\psi\|_{H^1}\lesssim\|\psi\|_{H^1}.
\end{align*}
By the claim, it is enough to show that resonances cannot be accumulated to zero. By the assumption, $(I+VR_0(0))$ is invertible in $\mathcal{L}(L^1)$.  We write 
\begin{align*}
(I+VR_0(\lambda))&=(I+VR_0(0)+V(R_0(\lambda)-R_0(0)))\\
&=(I+VR_0(0))(I+(I+VR_0(0))^{-1}V(R_0(\lambda)-R_0(0))).
\end{align*}
By the mean value theorem, we have
$$\|V_\epsilon (R_0(\lambda)-R_0(0))\|_{\mathcal{L}(L^1)}\leq\sup_{y\in\mathbb{R}^3}\Big\|V_\epsilon(x)\frac{e^{-\sqrt{-\lambda}|x-y|}-1}{4\pi|x-y|}\Big\|_{L_x^1}\lesssim \sqrt{-\lambda}\|V_\epsilon\|_{L^1}\to 0\textup{ as }\lambda\to 0-.$$
Moreover, we have
$$\|(V-V_\epsilon)(R_0(\lambda)-R_0(0))\|_{\mathcal{L}(L^1)}\lesssim\epsilon.$$
Hence, if $\lambda<0$ is sufficiently close to zero, one can make
$$\|(I+VR_0(0))^{-1}V(R_0(\lambda)-R_0(0))\|_{\mathcal{L}(L^1)}$$  
arbitrarily small. Thus, $(I+VR_0(\lambda))$ is invertible, and $\lambda$ is thus not a resonance.
\end{proof}

\begin{lemma}[Spectrum on the positive real-line \cite{BG}] If $V\in\mathcal{K}_0$ and $\mathcal{H}$ has no resonance on $[0,+\infty)$, then $\mathcal{H}$ has purely absolutely continuous spectrum on $[0,+\infty)$.
\end{lemma}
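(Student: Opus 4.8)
The plan is to establish a limiting absorption principle (LAP) for $\mathcal H$ on $(0,+\infty)$ and then read pure absolute continuity off Stone's formula. Write $V_1=|V|^{1/2}\operatorname{sign}(V)$, $V_2=|V|^{1/2}$, and $T(z):=V_1R_0(z)V_2$ as in the proof of Lemma 2.5, and recall the symmetrized resolvent identity
$$R_V(z)=R_0(z)-R_0(z)\,V_2\,(I+T(z))^{-1}\,V_1R_0(z),\qquad \operatorname{Im}z\neq 0 .$$
The first thing I would establish is that $z\mapsto T(z)$ extends from each open half-plane to a compact-operator-valued, $\mathcal B(L^2)$-norm-continuous function on the closure with $0$ deleted: for $V\in C_c^b$ the sandwiched kernel $|V(x)|^{1/2}\frac{e^{i\sqrt z\,|x-y|}}{4\pi|x-y|}|V(y)|^{1/2}\operatorname{sign}(V(y))$ is Hilbert--Schmidt and varies continuously with $z$, uniformly on compact subsets of $\overline{\mathbb C^{\pm}}\setminus\{0\}$ (including real $z$ and $z=0$), and the general $V\in\mathcal K_0$ follows by approximating $V$ by some $V_\epsilon\in C_c^b$ in $\|\cdot\|_{\mathcal K}$ together with the uniform-in-$z$ bound $\|V_1R_0(z)V_2-V_{\epsilon,1}R_0(z)V_{\epsilon,2}\|\lesssim\|V-V_\epsilon\|_{\mathcal K}^{1/2}\|V\|_{\mathcal K}^{1/2}$, which is the Cauchy--Schwarz estimate against $\tfrac1{|x-y|}$ already used in Lemma 2.1. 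By the same two-step device I would record that, for $s$ large, the half-sandwiched operators $\langle x\rangle^{-s}R_0(z)V_2$ and $V_1R_0(z)\langle x\rangle^{-s}$ are bounded on $L^2$, uniformly up to the boundary, and extend $\mathcal B(L^2)$-norm-continuously there (again a Cauchy--Schwarz estimate against $\tfrac1{|x-y|}$ controlled by $\|V\|_{\mathcal K}$), and I would invoke the classical free LAP: $\langle x\rangle^{-s}R_0(z)\langle x\rangle^{-s}$ extends $\mathcal B(L^2)$-norm-continuously to $\overline{\mathbb C^{\pm}}$ over $(0,+\infty)$.

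Second, I would turn the hypothesis into invertibility of $I+T(\lambda\pm i0)$ on $L^2$ for every $\lambda\geq 0$. Since $I+T(\lambda\pm i0)$ is a compact perturbation of the identity, invertibility is equivalent to injectivity; and by the standard equivalence of the $L^1$-, $L^2$- and weighted-$L^2$-realizations of the Birman--Schwinger operator (their null spaces corresponding under multiplication by powers of $|V|$ and application of $R_0$), injectivity of $I+T(\lambda\pm i0)$ on $L^2$ is equivalent to the stated absence of resonances; no $\lambda\geq 0$ can be an embedded eigenvalue by the argument of Lemma 2.6 (eigenfunctions are resonances), so passing to the boundary values $\lambda\pm i0$ causes no difficulty. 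Given this, $z\mapsto I+T(z)$ is $\mathcal B(L^2)$-norm-continuous and compact-valued up to the boundary and invertible at every point of a compact subinterval $[\lambda_0,\lambda_1]\subset(0,+\infty)$, so a routine compactness argument --- if $\|(I+T(z_n))^{-1}\|\to\infty$ along $z_n\to\lambda_*$, one extracts a unit null sequence and, using norm-compactness of $T(\lambda_*)$, a genuine null vector of $I+T(\lambda_*)$, a contradiction --- shows that $(I+T(z))^{-1}$ exists, is uniformly bounded, and is $\mathcal B(L^2)$-norm-continuous for $z$ in a neighborhood of $[\lambda_0,\lambda_1]$ within $\overline{\mathbb C^{\pm}}$.

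Third, feeding all of this into the symmetrized identity shows that $\langle x\rangle^{-s}R_V(z)\langle x\rangle^{-s}$ extends from $\mathbb C^{\pm}$ to a $\mathcal B(L^2)$-norm-continuous function on $\overline{\mathbb C^{\pm}}$ over $(0,+\infty)$; write $R_V(\lambda\pm i0)$ for the boundary values. Then for every $f$ in the dense set $\langle x\rangle^{s}L^2$ the map $\lambda\mapsto\langle R_V(\lambda+i0)f,f\rangle$ is continuous on $(0,+\infty)$, so by Stone's formula the spectral density $\tfrac1\pi\operatorname{Im}\langle R_V(\lambda+i0)f,f\rangle$ is continuous there and $d\langle E_{\mathcal H}(\lambda)f,f\rangle$ is absolutely continuous on $(0,+\infty)$; in particular $(0,+\infty)$ contains no eigenvalue of $\mathcal H$ (a pole of the resolvent would destroy continuity) and no singular-continuous mass. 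Since such $f$ are dense, $\mathcal H$ has purely absolutely continuous spectrum on $(0,+\infty)$. Finally $\{0\}$ supports no singular-continuous mass (that part has no atoms) and $0$ is not an eigenvalue because $0$ is not a resonance (Lemma 2.6 once more), so, together with $\sigma_{\mathrm{ess}}(\mathcal H)=[0,+\infty)$ from Lemma 2.5, $\mathcal H$ has purely absolutely continuous spectrum on all of $[0,+\infty)$.

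The main obstacle I anticipate is exactly the two boundary-regularity inputs of the first paragraph in the Kato-class setting --- the $\mathcal B(L^2)$-norm continuity up to $(0,+\infty)$ of $T(z)$ and of the half-sandwiched free resolvents, where $V$ need not decay pointwise so one must work entirely through $\|\cdot\|_{\mathcal K}$ --- together with the bookkeeping identifying the $L^1$-resonance hypothesis with invertibility of the $L^2$ Birman--Schwinger operator (the integrability step showing a null vector $\phi$ yields $V_2\phi\in L^1$ requires care, using the $\mathcal K_0$-structure); everything downstream (the compactness argument for $(I+T(z))^{-1}$, Stone's formula, and the handling of the endpoint) is soft. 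Since these boundary resolvent bounds are precisely the limiting absorption estimates proved in \cite{BG}, one could alternatively invoke them directly, reducing the proof to the resonance analysis and Stone's formula.
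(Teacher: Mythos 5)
The paper does not actually prove this lemma --- it is quoted verbatim from \cite{BG} with no argument supplied --- so there is no in-paper proof to compare against. Your overall strategy (symmetrized Birman--Schwinger operator, norm continuity and compactness of $T(z)=V_1R_0(z)V_2$ up to the boundary via $C_c^b$-approximation in the global Kato norm, Fredholm inversion of $I+T(\lambda\pm i0)$, uniform boundedness by a compactness argument, and Stone's formula) is the standard Agmon--Kato--Kuroda route and is the right skeleton; the first and third paragraphs of your plan are sound and the Kato-norm bookkeeping there does go through exactly as in Lemma 2.1.

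The genuine gaps are the two steps you defer in the second paragraph, and at least one of them fails as hinted. First, the identification of the $L^1$-resonance hypothesis with injectivity of $I+T(\lambda\pm i0)$ on $L^2$: the natural candidate for transporting a null vector $\phi\in L^2$ of $I+T(\lambda\pm i0)$ to a null vector of $I+VR_0(\lambda\pm i0)$ on $L^1$ is $g=V_2\phi=|V|^{1/2}\phi$, but Cauchy--Schwarz gives $g\in L^1$ only if $V\in L^1$, and $\mathcal{K}_0\not\subset L^1$ (e.g.\ a sum of unit bumps centered at rapidly separating points with heights $1/\log k$ lies in $\mathcal{K}_0$ but not in $L^1$). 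So the ``bookkeeping'' is not bookkeeping: one needs the nontrivial fact, proved in \cite{BG} by a separate argument, that the spectrum of the Birman--Schwinger operator is the same on $L^1$, $L^2$ and the intermediate weighted spaces for this potential class. Second, the claim that embedded positive eigenvalues are excluded ``by the argument of Lemma 2.6'' is not right: that argument uses the exponential decay of the kernel of $R_0(\lambda)$ for $\lambda<0$ (to place $R_0(\lambda)V\psi$ in $L^2$ and to run the Neumann series for $I+(V-V_\epsilon)R_0(\lambda)$), and for $\lambda>0$ the boundary resolvent $R_0(\lambda\pm i0)$ is oscillatory; showing that an $L^2$-eigenfunction at positive energy actually satisfies $\psi=-R_0(\lambda\pm i0)V\psi$ (rather than differing by a solution of the free Helmholtz equation) requires an Agmon-type bootstrap and is one of the substantive points of the theorem. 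Until these two steps are supplied, the argument establishes the limiting absorption principle only away from the (a priori possibly nonempty) set of energies where $I+T(\lambda\pm i0)$ fails to be injective.
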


\section{Norm Equivalence and Strichartz Estimates}

We present two main items for LWP: norm equivalence and Strichartz estimates.

\begin{lemma}[Sobolev inequality] $(i)$ (inhomogeneous) If $V\in\mathcal{K}_0$, then there exists $a\gg1$ such that $\|(1+a+\mathcal{H})^{-\frac{s}{2}}u\|_{L^q}\lesssim\|u\|_{L^p}$, where $1<p\leq q<\infty$, $0<s<3$ and $\frac{1}{q}\geq\frac{1}{p}-\frac{s}{3}$.\\
$(ii)$ (homogeneous) If $V\in\mathcal{K}_0$ and $\|V_-\|_{\mathcal{K}}<4\pi$, then $\|\mathcal{H}^{-\frac{s}{2}}u\|_{L^q}\lesssim\|u\|_{L^p}$, where $1<p<q<\infty$, $0<s<3$ and $\frac{1}{q}=\frac{1}{p}-\frac{s}{3}$.
\end{lemma}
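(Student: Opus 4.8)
The plan is to represent the negative powers of $1+a+\mathcal H$ (resp.\ of $\mathcal H$) by the subordination identity $A^{-s/2}=\frac1{\Gamma(s/2)}\int_0^\infty t^{s/2-1}e^{-tA}\,dt$ for $A>0$, to dominate the resulting integral kernel pointwise by a pure convolution kernel of Bessel (resp.\ Riesz) type, and then to apply the classical Young and Hardy--Littlewood--Sobolev convolution inequalities. The only genuine analytic input is a Gaussian upper bound for the Schr\"odinger heat kernel: since $V\in\mathcal K_0$ lies in the Kato class and $\mathcal H$ is self-adjoint and bounded below (Lemmas 2.1--2.2), $e^{-t\mathcal H}$ is a positivity-preserving semigroup whose integral kernel satisfies
$$0\le e^{-t\mathcal H}(x,y)\le C_0\,e^{\omega t}\,t^{-3/2}\,e^{-|x-y|^2/(ct)},\qquad t>0,$$
for some constants $C_0,c>0$ and $\omega\ge0$. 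This is classical; alternatively it follows from the Dyson--Phillips series $e^{-t\mathcal H}=\sum_{n\ge0}(-1)^n\int_{\sum_j t_j=t}e^{t_0\Delta}Ve^{t_1\Delta}V\cdots Ve^{t_n\Delta}\,dt$ by the argument already used in Lemma 2.1, whose proof shows that $\sup_x\int_0^\infty e^{-2at}(e^{t\Delta}|V|)(x)\,dt=\sup_x\int\frac{e^{-\sqrt{2a}|x-z|}}{4\pi|x-z|}|V(z)|\,dz$ can be made $<1$ for $a\gg1$ --- precisely the convergence condition for the suitably shifted series --- after which one chains the free Gaussians via $\sum_j|w_j-w_{j+1}|^2/t_j\ge|x-y|^2/t$ ($w_0=x$, $w_{n+1}=y$), retaining a fraction of each exponent to control the intermediate spatial integrals.

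For part $(i)$, choose $a$ large enough that Lemmas 2.1--2.2 apply and $1+a>\omega$, so that $1+a+\mathcal H\ge1$. By subordination the kernel $K_s$ of $(1+a+\mathcal H)^{-s/2}$ satisfies $0\le K_s(x,y)\le G_s(x,y)$, where
$$G_s(x,y):=\frac{C_0}{\Gamma(s/2)}\int_0^\infty t^{\frac s2-\frac52}\,e^{-(1+a-\omega)t}\,e^{-|x-y|^2/(ct)}\,dt$$
depends only on $|x-y|$ and hence acts by convolution. Since $0<s<3$ this $t$-integral converges at both endpoints (the Gaussian at $t\to0^+$, the exponent $\tfrac{s-5}2<-1$ at $t\to\infty$), and a routine splitting at $t\sim|x-y|^2$ gives $G_s(x,y)\lesssim|x-y|^{s-3}$ for $|x-y|\le1$ and $G_s(x,y)\lesssim e^{-\delta|x-y|}$ for $|x-y|\ge1$. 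Thus $G_s\lesssim|x-y|^{s-3}$ globally, $G_s\in L^r(\mathbb R^3)$ for every $r\in[1,\tfrac3{3-s})$, and $G_s\in L^{3/(3-s),\infty}(\mathbb R^3)$. When $\frac1q=\frac1p-\frac s3$ with $1<p<q<\infty$, the Hardy--Littlewood--Sobolev (weak Young) inequality gives $\|(1+a+\mathcal H)^{-s/2}u\|_{L^q}\le\|G_s*|u|\|_{L^q}\lesssim\|u\|_{L^p}$; when $\frac1q>\frac1p-\frac s3$ with $p\le q$, pick $r\in[1,\tfrac3{3-s})$ with $\frac1r=1-\frac1p+\frac1q$ and use the ordinary Young inequality. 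This exhausts the stated range.

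For part $(ii)$, the heat kernel bound must be upgraded to $\omega=0$, and this is exactly where $\|V_-\|_{\mathcal K}<4\pi$ enters. Writing $V=V_+-V_-$ with $V_\pm\ge0$, the Trotter product formula together with the positivity preservation of $e^{-t\mathcal H}$ and $e^{t(\Delta+V_-)}$ and the fact that multiplication by $e^{-tV_+}\in[0,1]$ can only shrink a nonnegative kernel yields the domination $0\le e^{-t\mathcal H}(x,y)\le e^{t(\Delta+V_-)}(x,y)$. The semigroup $e^{t(\Delta+V_-)}$ has the all-positive Dyson series $\sum_{n\ge0}\int_{\sum_j t_j=t}e^{t_0\Delta}V_-e^{t_1\Delta}V_-\cdots V_-e^{t_n\Delta}\,dt$, and the decisive point is the identity $\int_0^\infty e^{t\Delta}(x,y)\,dt=\frac1{4\pi|x-y|}$, which makes
$$\sup_{x}\int_0^\infty(e^{t\Delta}|V_-|)(x)\,dt=\frac1{4\pi}\sup_{x}\int_{\mathbb R^3}\frac{|V_-(y)|}{|x-y|}\,dy=\frac{\|V_-\|_{\mathcal K}}{4\pi}<1.$$
Chaining the Gaussians as above, the $n$-th term of the series is dominated by $C_1^{\,n+1}\big(\tfrac{\|V_-\|_{\mathcal K}}{4\pi}\big)^n t^{-3/2}e^{-|x-y|^2/(c't)}$ uniformly in $t$, so summation gives $e^{t(\Delta+V_-)}(x,y)\lesssim t^{-3/2}e^{-|x-y|^2/(c't)}$ for \emph{all} $t>0$. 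Subordination, now without a shift, then yields for $0<s<3$
$$|(\mathcal H^{-s/2}u)(x)|\le\frac1{\Gamma(s/2)}\int_0^\infty t^{\frac s2-1}\big(e^{-t\mathcal H}|u|\big)(x)\,dt\le c_s\big(|\cdot|^{s-3}*|u|\big)(x),$$
so $\mathcal H^{-s/2}$ is pointwise bounded by a constant times the Riesz potential, and the Hardy--Littlewood--Sobolev inequality gives $\|\mathcal H^{-s/2}u\|_{L^q}\lesssim\|u\|_{L^p}$ for $\frac1q=\frac1p-\frac s3$, $1<p<q<\infty$. (Since $0$ may lie in $\sigma(\mathcal H)=[0,\infty)$, one reads $\mathcal H^{-s/2}$ off the above kernel; it coincides with the spectral-calculus operator on $\mathcal S$ or on the range of $\mathcal H^{s/2}$, and the $L^p\to L^q$ bound extends by density.)

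The main obstacle is the $\omega=0$ bound in part $(ii)$: one must first pass from $V$ to $-V_-$ by semigroup domination --- the Dyson series for $V$ itself would only converge under the stronger hypothesis $\|V\|_{\mathcal K}<4\pi$ --- and then sum the time-integrated Dyson series uniformly in $t$, the sharp threshold $4\pi$ being forced by $\int_0^\infty e^{t\Delta}(x,y)\,dt=\frac1{4\pi|x-y|}$. The other ingredients (the $e^{\omega t}$ Gaussian bound for part $(i)$, the convolution inequalities, and the elementary estimates on $G_s$) are routine.
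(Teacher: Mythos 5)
Your proposal is correct and follows essentially the same route as the paper: subordination of $(1+a+\mathcal H)^{-s/2}$ (resp. $\mathcal H^{-s/2}$) to the heat semigroup, a Gaussian upper bound for $e^{-t\mathcal H}(x,y)$ obtained by dominating the semigroup by the one generated by $\Delta$ plus the attractive part of the potential (with the threshold $\|V_-\|_{\mathcal K}<4\pi$, resp. $\|(a+V)_-\|_{\mathcal K}<4\pi$, ensuring a global-in-time bound), and then Young/Hardy--Littlewood--Sobolev for the resulting Bessel/Riesz-type convolution kernels. The only difference is that you rederive the Gaussian bound from the Dyson--Phillips series where the paper simply cites the Feynman--Kac formula and Takeda's theorem.
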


\begin{proof}
Choose $a\gg1$ such that $\|(a+V)_-\|_{\mathcal{K}}<4\pi$. Observe that by the Feynmann-Kac formula (A.26) of \cite{Si} and \cite[Theorem 2]{Ta}, there exist $A_1, A_2>0$ such that
$$0\leq e^{-t(1+a+\mathcal{H})}(x,y)=e^{t(\Delta-(1+a+V))}(x,y)\leq e^{-t}e^{t(\Delta-(a+V)_-)}(x,y)\leq \frac{A_1}{t^{3/2}}e^{-t-A_2\frac{|x-y|^2}{t}}.$$
Applying it to
\begin{equation}
(1+a+\mathcal{H})^{-\frac{s}{2}}=\frac{1}{\Gamma(s/2)}\int_0^\infty e^{-t(1+a+\mathcal{H})}t^{\frac{s}{2}-1}dt,
\end{equation}
we obtain the kernel estimate for $(1+a+\mathcal{H})^{-\frac{s}{2}}$. The Sobolev inequality then follows from the inhomogeneous fractional integration inequality. By the same way, we can show $(ii)$.
\end{proof}

We denote the standard inhomogeneous (homogeneous, resp) Sobolev space by $W^{s,r}$ ($\dot{W}^{s,r}$, resp). We define the inhomogeneous (homogeneous, resp) Sobolev norm associated with $\mathcal{cH}$ by
$$\|u\|_{\mathfrak{W}^{s,r}}:=\|(1+a+\mathcal{H})^{\frac{s}{2}}u\|_{L^r}\ (\|u\|_{\dot{\mathfrak{W}}^{s,r}}:=\|\mathcal{H}^{\frac{s}{2}}u\|_{L^r})$$
where $a$ is a large number given by Lemma 3.1. We denote $\|u\|_{\mathfrak{H}^s}:=\|u\|_{\mathfrak{W}^{s,2}}$ and $\|u\|_{\dot{\mathfrak{H}}^s}:=\|u\|_{\dot{\mathfrak{W}}^{s,2}}$. The following lemma says that these two norms are equivalent for some $r$.

\begin{lemma}[Norm equivalence] If $V\in\mathcal{K}_0\cap L^{3/2,\infty}$, then 
$$\|u\|_{\mathfrak{W}^{s,r}}\sim\|u\|_{W^{s,r}}\textup{ and }\|u\|_{\dot{\mathfrak{W}}^{s,r}}\sim\|u\|_{\dot{W}^{s,r}},$$
where $1<r<\frac{3}{s}$ and $0\leq s\leq2$.
\end{lemma}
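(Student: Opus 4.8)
The plan is to transfer both equivalences to $L^r$‑boundedness of two explicit operators and then to pass from the endpoint $s=2$ to the range $0<s<2$ by complex interpolation. Precisely, $\|u\|_{\mathfrak{W}^{s,r}}\sim\|u\|_{W^{s,r}}$ is the assertion that
$$A_s:=(1-\Delta)^{\frac{s}{2}}(1+a+\mathcal{H})^{-\frac{s}{2}}\quad\text{and}\quad A_s^{-1}=(1+a+\mathcal{H})^{\frac{s}{2}}(1-\Delta)^{-\frac{s}{2}}$$
are bounded on $L^r$, and the homogeneous equivalence is the same statement for $(-\Delta)^{s/2}\mathcal{H}^{-s/2}$ and $\mathcal{H}^{s/2}(-\Delta)^{-s/2}$. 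The case $s=0$ is trivial, so there are two genuine steps.

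For $s=2$ I would argue by hand using the algebraic identity $1-\Delta=(1+a+\mathcal{H})-(a+V)$, which gives
$$A_2=I-(a+V)(1+a+\mathcal{H})^{-1},\qquad A_2^{-1}=I+(a+V)(1-\Delta)^{-1};$$
thus it suffices to bound $(a+V)(1+a+\mathcal{H})^{-1}$ and $(a+V)(1-\Delta)^{-1}$ on $L^r$ for $1<r<\tfrac32$. The pieces $a(1+a+\mathcal{H})^{-1}$ and $a(1-\Delta)^{-1}$ are bounded on $L^r$ by Lemma 3.1 (with $s=2$) and by the classical resolvent bound. For the potential pieces, integrating the Gaussian heat‑kernel estimate from the proof of Lemma 3.1 yields the pointwise bound $(1+a+\mathcal{H})^{-1}(x,y)\lesssim|x-y|^{-1}e^{-c|x-y|}$, so that (via the Lorentz‑refined fractional integration inequality) $(1+a+\mathcal{H})^{-1}\colon L^r\to L^{q,r}$ with $\tfrac1q=\tfrac1r-\tfrac23$, and likewise $(1-\Delta)^{-1}\colon L^r\to L^{q,r}$; since $V\in L^{3/2,\infty}$, multiplication by $V$ then maps $L^{q,r}$ back into $L^r$ by Hölder's inequality in Lorentz spaces. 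This gives $A_2^{\pm1}\in\mathcal{L}(L^r)$ for $1<r<\tfrac32$, and the homogeneous case $s=2$ follows the same way from $-\Delta=\mathcal{H}-V$ and $\mathcal{H}^{-1}\colon L^r\to L^{q,r}$ provided by Lemma 3.1$(ii)$.

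For $0<s<2$ I would interpolate. Since $1+a+\mathcal{H}$ is a positive self‑adjoint operator on $L^2$ whose heat semigroup obeys the Gaussian upper bound established in the proof of Lemma 3.1, it has bounded imaginary powers on $L^p$ for every $1<p<\infty$, with $\|(1+a+\mathcal{H})^{i\tau}\|_{L^p\to L^p}$ of admissible (sub‑exponential) growth in $\tau$; the same is classically true of $(1-\Delta)^{i\tau}$. Hence $z\mapsto(1-\Delta)^{z}(1+a+\mathcal{H})^{-z}$ is an analytic family of operators, bounded on $L^{r_0}$ for $\Re z=0$ and, using the $s=2$ estimates flanked by imaginary powers, bounded on $L^{r_1}$ for $\Re z=1$ (any $1<r_1<\tfrac32$), with admissible growth on both lines; Stein's interpolation theorem then gives $A_s\in\mathcal{L}(L^r)$ with $\tfrac1r=\tfrac{1-\theta}{r_0}+\tfrac{\theta}{r_1}$, $\theta=\tfrac s2$, and similarly for $A_s^{-1}$ starting from $A_2^{-1}$. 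Letting $r_0$ range over $(1,\infty)$ and $r_1$ over $(1,\tfrac32)$, the attainable reciprocals $\tfrac1r$ fill out exactly the open interval $(\tfrac s3,1)$, i.e.\ $1<r<\tfrac3s$, which is the asserted range; the homogeneous statement is obtained identically (under the hypotheses of Lemma 3.1$(ii)$, which also make $\mathcal{H}^{s/2}$ meaningful). The routine parts here are the $s=2$ identities and the Sobolev/Lorentz–Hölder bookkeeping; the one substantive input I expect to require the most care is the bounded imaginary powers (equivalently, bounded $H^\infty$‑functional calculus) of $1+a+\mathcal{H}$ and of $\mathcal{H}$ on $L^p$, which is precisely the place where the Gaussian heat‑kernel bounds from Lemma 3.1 enter, through the general functional‑calculus theory for operators with Gaussian bounds. (An alternative to this interpolation step would be to invoke $L^p$‑boundedness of the wave operators for $\mathcal{H}$, but that merely relocates the analytic difficulty.)
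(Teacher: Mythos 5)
Your proposal is correct and takes essentially the same route as the paper: the endpoint $s=2$ is handled via the identity $1+a-\Delta=(1+a+\mathcal{H})-(a+V)$ together with Lorentz-space H\"older and Sobolev estimates using $V\in L^{3/2,\infty}$, and the intermediate $0<s<2$ is obtained by Stein--Weiss complex interpolation from bounded imaginary powers of $1+a+\mathcal{H}$ and $1+a-\Delta$, which the paper (following D'Ancona--Fanelli--Vega--Visciglia) derives from the Gaussian heat-kernel bounds via Sikora--Wright.
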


\begin{proof}
Following \cite{DFVV}, we will show the inhomogeneous case. We omit the proof for the homogeneous case, since it can be proved similarly.

By the H\"older inequality and the Sobolev inequality in the Lorentz norms, we have
$$\|(1+a-\Delta)u\|_{L^r}\leq\|f\|_{\mathfrak{W}^{2,r}}+\|Vf\|_{L^r}\lesssim\|f\|_{\mathfrak{W}^{2,r}}+\|V\|_{L^{3/2,\infty}}\|f\|_{L^{\frac{3r}{3-2r},r}}\lesssim\|f\|_{\mathfrak{W}^{2,r}}$$
for $1<r<\frac{3}{2}$. Moreover, since $e^{-t(1+a+\mathcal{H})}$ and $e^{-t(1+a-\Delta)}$ satisfy the Gaussian heat kernel estimate \cite{Ta}, it follows from Sikora and Wright \cite{SW} that the imaginary power operators $(1+a+\mathcal{H})^{iy}$ and $(1+a-\Delta)^{iy}$, with $y\in\mathbb{R}$, are bounded on $L^r$ for all $1<r<\infty$ with the operator norm $\sim\la y\ra^{3/2}$. Therefore, we obtain 
\begin{align*}
\|(1+a-\Delta)^{iy}(1+a+\mathcal{H})^{-iy}\|_{L^r\to L^r}&\lesssim\la y\ra^3\textup{ for }1<r<\infty\textup{ and }y\in\mathbb{R}\\
\|(1+a-\Delta)^{2+iy}(1+a+\mathcal{H})^{-2-iy}\|_{L^r\to L^r}&\lesssim\la y\ra^3\textup{ for }1<r<\tfrac{3}{2}\textup{ and }y\in\mathbb{R}.
\end{align*}
By the Stein-Weiss complex interpolation theorem as in \cite{DFVV}, we conclude that 
$$\|f\|_{W^{s,r}}\sim\|(1+a-\Delta)^{\frac{s}{2}}f\|_{L^r}\lesssim\|(1+a+\mathcal{H})^{\frac{s}{2}}f\|_{L^r}=\|u\|_{\mathfrak{W}^{s,r}}\textup{ for }1<r<\tfrac{3}{s}.$$
The other direction follows from the same argument.
\end{proof}

Recall the dispersive estimate of Beceanu and Goldberg \cite{BG}:
\begin{lemma}[Dispersion estimate \cite{BG}] If $V\in\mathcal{K}_0$ and $\mathcal{H}$ has no resonance on $[0,+\infty)$, then
$$\|e^{-it\mathcal{H}}P_c\|_{L^1\to L^\infty}\lesssim |t|^{-3/2},$$
where $P_c$ is the spectral projection to the continuous spectrum $[0,+\infty)$.
\end{lemma}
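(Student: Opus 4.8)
The dispersive bound is due to Beceanu and Goldberg, so ultimately one cites \cite{BG}; here I sketch the strategy one would follow. The starting point is Stone's formula for the absolutely continuous part of the evolution,
$$e^{-it\mathcal{H}}P_c=\frac{1}{2\pi i}\int_0^\infty e^{-it\lambda}\bigl[R_V(\lambda+i0)-R_V(\lambda-i0)\bigr]\,d\lambda,$$
together with the symmetric resolvent identity: writing $V=V_1V_2$ with $V_1=|V|^{1/2}\textup{sign}(V)$ and $V_2=|V|^{1/2}$ as in Section 2,
$$R_V(\lambda\pm i0)=R_0(\lambda\pm i0)-R_0(\lambda\pm i0)\,V_1\,\bigl(I+V_2R_0(\lambda\pm i0)V_1\bigr)^{-1}\,V_2R_0(\lambda\pm i0).$$
The zeroth term reproduces the free propagator $e^{it\Delta}$, whose kernel $(4\pi it)^{-3/2}e^{i|x-y|^2/4t}$ already obeys the claimed $|t|^{-3/2}$ bound from $L^1$ to $L^\infty$; so everything reduces to controlling the correction term, an oscillatory integral in $\lambda$, as an operator from $L^1$ to $L^\infty$ with weight $|t|^{3/2}$.

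For that I would first establish uniform invertibility of $M^\pm(\lambda):=I+V_2R_0(\lambda\pm i0)V_1$ on the relevant space, with quantitative control of $\|M^\pm(\lambda)^{-1}\|$ and of its modulus of continuity in $\lambda$. Away from $\lambda=0$ and $\lambda=+\infty$ this is an analytic-Fredholm argument: $V_2R_0(\lambda\pm i0)V_1$ depends continuously on $\lambda$ and is compact after the reduction to $V\in C_c^b$ below, so $M^\pm(\lambda)$ is invertible unless $\lambda$ is an embedded eigenvalue or resonance of $\mathcal{H}$, which the hypothesis excludes. At high energy one uses smallness — either the Kato-norm estimate for $V_2R_0(\lambda)V_1$ of the type proved in Lemma 2.1, or, more robustly, convergence of the Born series directly in the time-weighted operator norm. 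The delicate endpoint is $\lambda\to0+$: there $R_0(\lambda\pm i0)$ converges to the operator with kernel $(4\pi|x-y|)^{-1}$, and invertibility of the limit $I+V_2R_0(0)V_1$ is exactly the statement that $0$ is not a resonance (consistent with Lemma 2.4, which shows resonances do not accumulate there); one then expands $M^\pm(\lambda)^{-1}$ in powers of $\sqrt\lambda$ and absorbs the remainder.

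With $M^\pm(\lambda)^{-1}$ under control, the correction term carries the phase $-t\lambda\pm\sqrt\lambda(|x-y_1|+|y_2-z|)$ coming from the two free-resolvent kernels, and integrating in $\lambda$ — via integration by parts, or stationary phase around $\sqrt\lambda\sim(|x-y_1|+|y_2-z|)/2t$, exactly as in the explicit evaluation of $e^{it\Delta}$ — produces the gain $|t|^{-3/2}$, the remaining spatial kernels being integrated against $L^1$ data using the Kato bound. The cleanest way to organize this, and the technical core of \cite{BG}, is to work in a Wiener-type algebra of operator-valued measures in the spectral variable that is by construction compatible with the $|t|^{-3/2}$ $L^1\to L^\infty$ bound, and to show that the perturbation series converges there. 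Finally one reduces to $V\in C_c^b$, where compactness and Agmon-type bounds are classical, proves the estimate with a constant depending only on $\|V\|_{\mathcal{K}}$ and on the resonance hypothesis in a stable fashion, and passes to the limit using the density of $C_c^b$ in $\mathcal{K}_0$.

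The main obstacle is precisely this combination of difficulties: the potential is only scaling-critical — no decay, no smoothness — so the Born series cannot be summed with any loss, and the summation must take place in a norm already tuned to the dispersive estimate; and the low-energy threshold analysis, where the no-resonance-at-zero hypothesis must be used both to invert $I+V_2R_0(0)V_1$ and to continue $M^\pm(\lambda)^{-1}$ with enough regularity down to $\lambda=0$, is the part most likely to require real work.
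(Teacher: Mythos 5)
Your proposal is correct and matches the paper's treatment: the paper states this lemma as a cited result of Beceanu and Goldberg \cite{BG} and offers no proof of its own, which is exactly what you do. Your sketch of the underlying argument (Stone's formula, the symmetric resolvent identity, summation of the Born series in a Wiener-type algebra adapted to the $|t|^{-3/2}$ bound, and the low-energy analysis using the absence of a zero resonance) is a faithful account of the strategy in \cite{BG}, but it is supplementary to what the paper requires.
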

By the argument of \cite{KT}, we derive Strichartz estimates from the dispersive estimate. For $0\leq s<\frac{3}{2}$, an exponent pair $(q,r)$ is called $s$-\textit{admissible} if $2\leq q\leq\infty$ and $2\leq r<3/s$ and $\frac{2}{q}+\frac{3}{r}=\frac{3}{2}$. We define the inhomogeneous (homogeneous) distorted Strichartz norm by
$$\|u\|_{\mathfrak{S}^s(I)}:=\sup_{(q,r):\ H^s\text{-admissible}} \|u\|_{L^q_{t\in I}\mathfrak{W}^{s,r}_x}\ \Big(\|u\|_{\dot{\mathfrak{S}}^s(I)}:=\sup_{(q,r):\ H^s\text{-admissible}} \|u\|_{L^q_{t\in I}\dot{\mathfrak{W}}^{s,r}_x}\Big).$$
Similarly, we define the inhomogeneous (homogeneous) standard Strichartz norm by
$$\|u\|_{S^s(I)}:=\sup_{(q,r):\ H^s\text{-admissible}} \|u\|_{L^q_{t\in I}\mathfrak{W}^{s,r}_x}\ \Big(\|u\|_{\dot{S}^s(I)}:=\sup_{(q,r):\ H^s\text{-admissible}} \|u\|_{L^q_{t\in I}\dot{\mathfrak{W}}^{s,r}_x}\Big).$$
Note that by the norm equivalence, $\|u\|_{\mathfrak{S}^s(I)}\sim \|u\|_{S^s(I)}$ and $\|u\|_{\dot{\mathfrak{S}}^s(I)}\sim\|u\|_{\dot{S}^s(I)}$.

\begin{lemma}[Strichartz estimates] If $V\in\mathcal{K}_0$ and $\mathcal{H}$ has no resonance on $[0,+\infty)$, then
\begin{align*}
\|e^{-it\mathcal{H}}P_cf\|_{\mathfrak{S}^s(I)}&\lesssim\|f\|_{\mathfrak{H}^s},\\
\Big\|\int_0^t e^{-i(t-s)\mathcal{H}}P_cF(s)ds\Big\|_{\mathfrak{S}^s(I)}&\lesssim\|F\|_{L_t^2 \mathfrak{W}_x^{s,6/5}}.
\end{align*}
In both inequalities, the inhomogeneous norms can be replaced by the homogeneous ones.
\end{lemma}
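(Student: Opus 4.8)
The plan is to run the $TT^*$ argument of Keel and Tao \cite{KT} on the \emph{projected} propagator $U(t):=e^{-it\mathcal{H}}P_c$, feeding in the dispersive bound of Lemma 3.3 together with the trivial $L^2$ estimate, and then to commute the Sobolev weights through everything. Since $\mathcal{H}$ is self-adjoint on $L^2$ by Lemma 2.3, $e^{-it\mathcal{H}}$ is unitary, and $P_c$, being the spectral projection of a self-adjoint operator onto $[0,+\infty)$, is an orthogonal projection; hence $\|U(t)\|_{L^2\to L^2}\leq 1$. Moreover $P_c$ commutes with $e^{is\mathcal{H}}$ and satisfies $P_c^*=P_c=P_c^2$, so $U(t)U(s)^*=e^{-it\mathcal{H}}P_c P_c^* e^{is\mathcal{H}}=e^{-i(t-s)\mathcal{H}}P_c$, and Lemma 3.3 gives $\|U(t)U(s)^*\|_{L^1\to L^\infty}\lesssim|t-s|^{-3/2}$ for all $t\neq s$. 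It is essential to project first: $e^{-it\mathcal{H}}$ itself carries the bound states and obeys no global $L^1\to L^\infty$ bound.

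These are precisely the hypotheses of the Keel--Tao theorem with dispersive exponent $\sigma=3/2$, whose sharp admissibility relation $\tfrac1q+\tfrac{\sigma}{r}=\tfrac{\sigma}{2}$ is the condition $\tfrac2q+\tfrac3r=\tfrac32$ from the text, and in dimension $3$ the endpoint $(q,r)=(2,6)$ is allowed. Thus \cite[Theorem 1.2]{KT} yields, for all $0$-admissible pairs $(q,r)$ and $(\tilde q,\tilde r)$,
\begin{align*}
\|e^{-it\mathcal{H}}P_c g\|_{L^q_t L^r_x}&\lesssim\|g\|_{L^2},\\
\Big\|\int_0^t e^{-i(t-s)\mathcal{H}}P_c G(s)\,ds\Big\|_{L^q_t L^r_x}&\lesssim\|G\|_{L^{\tilde q'}_t L^{\tilde r'}_x},
\end{align*}
the retarded estimate already incorporating the cutoff $s<t$, so no separate Christ--Kiselev step is needed; taking $(\tilde q,\tilde r)=(2,6)$ in the second line produces $\|G\|_{L^2_t L^{6/5}_x}$ on the right.

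To obtain the $s$-derivative versions I would use that $(1+a+\mathcal{H})^{s/2}$ — and, in the homogeneous case, $\mathcal{H}^{s/2}P_c$, which is well defined by functional calculus on the continuous subspace where $\mathcal{H}\geq0$ — commutes with both $e^{-it\mathcal{H}}$ and $P_c$ by the spectral theorem. Applying the two displays above to $g=(1+a+\mathcal{H})^{s/2}f$ and $G=(1+a+\mathcal{H})^{s/2}F$ and pulling the operator back through the propagator and the $s$-integral converts them into
$$\|e^{-it\mathcal{H}}P_c f\|_{L^q_t\mathfrak{W}^{s,r}_x}\lesssim\|f\|_{\mathfrak{H}^s},\qquad\Big\|\int_0^t e^{-i(t-s)\mathcal{H}}P_c F(s)\,ds\Big\|_{L^q_t\mathfrak{W}^{s,r}_x}\lesssim\|F\|_{L^2_t\mathfrak{W}^{s,6/5}_x}.$$
Taking the supremum over $s$-admissible $(q,r)$ gives the asserted bounds in $\mathfrak{S}^s(I)$, the restriction to an interval $I\subset\mathbb{R}$ being immediate from the global-in-time estimates; the homogeneous statement follows verbatim with $\mathcal{H}^{s/2}P_c$ replacing $(1+a+\mathcal{H})^{s/2}$.

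As for where the difficulty lies: once Lemma 3.3 is available the rest is bookkeeping, and the one genuinely hard analytic ingredient — the Keel--Tao endpoint estimate at $(q,r)=(2,6)$ — is simply quoted. The step that deserves care is the reduction to the projected propagator, i.e.\ checking $P_c=P_c^*=P_c^2$ and the identity $U(t)U(s)^*=e^{-i(t-s)\mathcal{H}}P_c$ so that the abstract theorem applies with the $|t-s|^{-3/2}$ decay of Lemma 3.3; this is exactly where the standing hypotheses $V\in\mathcal{K}_0$ and the absence of resonances on $[0,+\infty)$ enter, via Lemmas 2.3 and 3.3.
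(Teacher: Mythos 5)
Your proposal is correct and follows exactly the route the paper intends: the paper gives no written proof of Lemma 3.4 beyond the remark that the estimates follow ``by the argument of [KT]'' from the dispersive estimate of Lemma 3.3, and your write-up is precisely that Keel--Tao $TT^*$ argument applied to $U(t)=e^{-it\mathcal{H}}P_c$, with the Sobolev weights commuted through by the spectral theorem. No discrepancies to report.
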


\section{Local-in-time Well-posedness and Energy Conservation Law}

Now we prove local well-posedness (LWP), persistence of regularity and conservation laws for  nonlinear Schr\"odinger equations with potentials. We begin by LWP in $H^s$ for $s\in(\frac{1}{2},1]$.

\begin{theorem}[LWP in $H^s$ for $\textup{$s\in(\frac{1}{2},1]$}$]
If $V\in \mathcal{K}_0\cap L^{3/2,\infty}$ and $\mathcal{H}$ has no resonance on $[0,+\infty)$, then $\textup{NLS}_V$ is locally-in-time well-posed in $H^s$ for $s\in(\frac{1}{2},1]$.
\end{theorem}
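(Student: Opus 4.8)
The plan is to run the standard Banach fixed-point argument of Cazenave--Weissler, but carried out in the \emph{distorted} Strichartz spaces $\mathfrak{S}^s(I)$ adapted to $\mathcal{H}$, using Lemma 3.5 in place of the free Strichartz estimates and Lemma 3.2 to translate the nonlinear estimate (which is most naturally done for the ordinary Sobolev norm $W^{s,r}$) back into the $\mathfrak{W}^{s,r}$-scale. Since $\sigma_{\mathrm{ess}}(\mathcal{H}) = [0,+\infty)$ with at most finitely many negative eigenvalues (Lemmas 2.4--2.5), we first reduce to the continuous-spectrum part: write $u = P_c u + P_d u$ where $P_d = I - P_c$ is the finite-rank projection onto the (finitely many) negative eigenspaces. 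On the finite-dimensional range of $P_d$, $e^{-it\mathcal{H}}$ acts by a bounded matrix exponential, so $\|e^{-it\mathcal{H}}P_d f\|_{\mathfrak{H}^s} \lesssim_I \|f\|_{\mathfrak{H}^s}$ trivially on any bounded interval $I$, and the Duhamel term with $P_d$ is likewise controlled; hence it suffices to treat $P_c$, where the genuine dispersive input of Lemma 3.5 is available.

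Next I set up the contraction. Let $I = [-T,T]$ and define
$$
\Phi(u)(t) = e^{-it\mathcal{H}} u_0 \pm i \int_0^t e^{-i(t-\tau)\mathcal{H}}\big(|u|^2 u\big)(\tau)\, d\tau,
$$
working in the ball $\{\|u\|_{\mathfrak{S}^s(I)} \le M\}$ with $M \sim \|u_0\|_{\mathfrak{H}^s}$. The linear term is bounded by Lemma 3.5. For the Duhamel term, Lemma 3.5 reduces matters to estimating $\||u|^2 u\|_{L_t^2 \mathfrak{W}_x^{s,6/5}}$; by the norm equivalence (Lemma 3.2), this is comparable to $\||u|^2 u\|_{L_t^2 W_x^{s,6/5}}$, which is exactly the quantity one controls in the flat case. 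The key point is that for $s \in (\tfrac12,1]$ the fractional Leibniz rule (product rule) in $W^{s,6/5}$ combined with the Sobolev embedding for the $H^s$-admissible pair, plus H\"older in time on the bounded interval $I$, yields
$$
\||u|^2 u\|_{L_t^2 W_x^{s,6/5}} \lesssim T^{\theta}\, \|u\|_{\mathfrak{S}^s(I)}^3
$$
for some $\theta > 0$, using again Lemma 3.2 to pass back to the distorted norms. Standard choices: take the admissible pair $(q,r)$ with $r$ near $3$ (so $u \in L_t^q W_x^{s,r}$ embeds into $L_t^q L_x^{\infty-}$ after losing a little, available since $s > \tfrac12$), estimate two factors in $L_x^\infty$-type norms and one in $W^{s,r}$, and absorb the time integration. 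The difference estimate $\|\Phi(u)-\Phi(v)\|_{\mathfrak{S}^s(I)} \lesssim T^\theta(M^2)\|u-v\|_{\mathfrak{S}^s(I)}$ follows identically from the trilinear structure, so for $T$ small depending only on $\|u_0\|_{\mathfrak{H}^s}$ (equivalently $\|u_0\|_{H^s}$), $\Phi$ is a contraction and we obtain a unique fixed point $u \in C(I;H^s) \cap \mathfrak{S}^s(I)$.

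Finally, continuous dependence on the data follows from the same difference estimate applied to two solutions with nearby initial data, and blow-up alternative / maximal interval of existence is standard. The main obstacle is the \emph{nonlinear estimate in the distorted scale}: a priori the fractional Leibniz rule is a statement about $-\Delta$, not about $\mathcal{H}$, so one genuinely needs Lemma 3.2 to hold for the range of exponents $r$ dictated by the $H^s$-admissible pairs with $s \in (\tfrac12,1]$ — in particular one must check that the relevant $r < \tfrac{3}{s}$, which is why the hypothesis $V \in \mathcal{K}_0 \cap L^{3/2,\infty}$ (needed for Lemma 3.2) appears in the statement. A secondary technical point is ensuring the $T^\theta$ gain: since the admissible-pair H\"older exponents are borderline, one picks $r$ slightly below the endpoint to create room for the positive power of $T$, which is harmless because $W^{s,r}$ with that $r$ is still covered by Lemma 3.2.
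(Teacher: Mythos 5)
Your proposal is correct and follows essentially the same route as the paper: a contraction in the distorted Strichartz space $\mathfrak{S}^s(I)$, splitting off the finite-rank discrete spectral part (handled trivially by the boundedness of the eigenprojections), applying Lemma 3.5 to the $P_c$ part, and using the norm equivalence of Lemma 3.2 together with the fractional Leibniz rule, Sobolev embedding, and H\"older in time to extract the factor $T^{\theta}$ (the paper's $\theta=\frac{2s-1}{2}$, which is where $s>\frac12$ enters). The only details you leave implicit --- the precise H\"older splitting $\||u|^2u\|_{L_t^2 W_x^{s,6/5}}\lesssim\|u\|_{L_t^4L_x^6}^2\|u\|_{L_t^\infty H_x^s}$ and the pairing of $|u|^2u$ against the eigenfunctions in $H^{-3/2+}\times H^{3/2-}$ --- are exactly the ones the paper supplies, and your outline accommodates them.
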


\begin{proof}
For $u_0\in \mathfrak{H}^s$, let $\mathfrak{X}:=\{u: \|u\|_{\mathfrak{S}^s(I)}\leq  2c\|u_0\|_{\mathfrak{H}^s}\}$, where $c$ is a large constant and $I=[0,T]$ with $T\in(0,1]$ to be chosen later. By the norm equivalence $\|u\|_{\mathfrak{H}^s}\sim \|u\|_{H^s}$, the theorem follows once we show that
\begin{equation}
\begin{aligned}
\Phi_{u_0}(u):&=e^{-it\mathcal{H}}u_0\pm i\int_0^t e^{-i(t-s)\mathcal{H}}(|u|^2u)(s)ds\\
&=e^{-it\mathcal{H}}P_cu_0\pm i\int_0^t e^{-i(t-s)\mathcal{H}}P_c(|u|^2u)(s)ds+\sum_{j=1}^Je^{-it\lambda_j}\la u_0,\psi_j\ra_{L^2}\psi_j\\
&\pm i\sum_{j=1}^J\la\int_0^t e^{-i(t-s)\lambda_j}(|u|^2u)(s)ds, \psi_j\ra_{L^2}\psi_j
\end{aligned}
\end{equation}
is a contraction on $\mathfrak{X}$, where $\psi_j\in\mathfrak{H}^2$ is a normalized eigenfunction such that $\|\psi_j\|_{L^2}=1$. Note that by the Sobolev inequality and the norm equivalence, $\|\psi_j\|_{\mathfrak{S}^s(I)}\lesssim\|\psi_j\|_{\mathfrak{H}^2}$ and $\|\psi_j\|_{H^{\frac{3}{2}-}}\sim\|\psi_j\|_{\mathfrak{H}^{\frac{3}{2}-}}\leq\|\psi_j\|_{\mathfrak{H}^2}$. For notational convenience, we omit the time interval $I$ in the norm $\|\cdot\|_{L_{t\in I}^r}$ unless there is no confusion.

Applying the Strichartz estimates to $(4.1)$, we write
\begin{align*}
\|\Phi_{u_0}(u)\|_{\mathfrak{S}^s(I)}&\lesssim\|u_0\|_{\mathfrak{H}^s}+\||u|^2u\|_{L_t^2\mathfrak{W}_x^{s,6/5}}+\|u_0\|_{L^2}\sum_{j=1}^J\|\psi_j\|_{L^2}\|\psi_j\|_{\mathfrak{S}^s(I)}\\
&+\||u|^2u\|_{L_t^1H_x^{-3/2+}}\sum_{j=1}^J\|\psi_j\|_{H^{\frac{3}{2}-}}\|\psi_j\|_{\mathfrak{S}^s(I)}\\
&\lesssim\|u_0\|_{\mathfrak{H}^s}+\||u|^2u\|_{L_t^2\mathfrak{W}_x^{s,6/5}}+\||u|^2u\|_{L_t^1L_x^{1+}}
\end{align*}
By the norm equivalence, the fractional Leibniz rule and the Sobolev inequality,
\begin{align*}
\||u|^2u\|_{L_t^2\mathfrak{W}_x^{s,6/5}}&\sim\||u|^2u\|_{L_t^2W_x^{s,6/5}}\lesssim \|u\|_{L_t^4 L_x^6}^2\|u\|_{L_t^\infty H_x^s}\\
&\lesssim T^{\frac{2s-1}{2}}\|u\|_{L_t^{\frac{2}{1-s}} \mathfrak{W}_x^{s,\frac{6}{1+2s}}}^2\|u\|_{L_t^\infty \mathfrak{H}_x^s}\leq T^{\frac{2s-1}{2}}\|u\|_{\mathfrak{S}^s(I)}^3,\\
\||u|^2u\|_{L_t^1L_x^{1+}}&=T\|u\|_{L_t^\infty L_x^{3+}}^3\lesssim T\|u\|_{L_t^\infty \mathfrak{H}_x^s}^3\leq T\|u\|_{\mathfrak{S}^s(I)}^3.
\end{align*}
Therefore, for $u\in\mathfrak{X}$, we obtain 
\begin{equation}
\|\Phi_{u_0}(u)\|_{\mathfrak{S}^s(I)}\leq c\|u_0\|_{\mathfrak{H}^s}+cT^{\frac{2s-1}{2}}\|u\|_{\mathfrak{S}^s(I)}^3\leq c\|u_0\|_{\mathfrak{H}^s}+cT^{\frac{2s-1}{2}}(2c\|u_0\|_{\mathfrak{H}^s})^3.
\end{equation}
Similarly, we estimate the difference
\begin{align*}
\Phi_{u_0}(u)-\Phi_{u_0}(v)&=\pm i\int_0^t e^{-i(t-s)\mathcal{H}}P_c(|u|^2u-|v|^2v)(s)ds\\
&\pm i\sum_{j=1}^J\la\int_0^t e^{-i(t-s)\lambda_j}(|u|^2u-|v|^2v)(s)ds, \psi_j\ra_{L^2}\psi_j
\end{align*}
to get
\begin{equation}
\begin{aligned}
\|\Phi_{u_0}(u)-\Phi_{u_0}(v)\|_{\mathfrak{S}^s(I)}&\leq cT^{\frac{2s-1}{2}}(\|u\|_{\mathfrak{S}^s(I)}^2+\|v\|_{\mathfrak{S}^s(I)}^2)\|u-v\|_{\mathfrak{S}^s(I)}\\
&\leq cT^{\frac{2s-1}{2}}(2c\|u_0\|_{\mathfrak{H}^s})^2\|u-v\|_{\mathfrak{S}^s(I)}.
\end{aligned}
\end{equation}
From $(4.2)$ and $(4.3)$, we conclude that if $T$ is small enough depending on $\|u_0\|_{\mathfrak{H}^s}\sim\|u_0\|_{H^s}$, then $\Phi_{u_0}$ is a well-defined contraction mapping on $\mathfrak{X}$.
\end{proof}

Next, we prove the mass and energy conservation laws:
$$M[u(t)]=\|u(t)\|_{L^2}^2=M[u_0]\textup{ (mass)},$$
$$E[u(t)]=\int_{\mathbb{R}^3} \frac{1}{2}|\nabla u(t)|^2+\frac{1}{2}V|u(t)|^2dx\mp\frac{1}{4}|u(t)|^4 dx=E[u_0]\textup{ (energy)}.$$ 
Formally, if $u$ solves $\textup{NLS}_V$, then 
\begin{equation}
\begin{aligned}
\frac{d}{dt}M[u]&=2\Re\int_{\mathbb{R}^3}u_t\bar{u}dx=\Im\int_{\mathbb{R}^3}(\mathcal{H}u\mp|u|^2u)\bar{u}dx=0,\\
\frac{d}{dt}E[u]&=\Re\int_{\mathbb{R}^3}u_t\overline{(\mathcal{H}u\mp|u|^2u)}dx=\Im\int_{\mathbb{R}^3}(\mathcal{H}u\mp|u|^2u)\overline{(\mathcal{H}u\mp|u|^2u)}dx=0.
\end{aligned}
\end{equation}
However, we need some extra works to make the above calculation rigorous, since $\mathcal{H}u$ is not defined for $H^1$-solutions.

\begin{lemma}[LWP in $\mathfrak{H}^2$] If $V\in \mathcal{K}_0\cap L^{3/2,\infty}$ and $\mathcal{H}$ has no resonance on $[0,+\infty)$, then $\textup{NLS}_V$ is locally well-posed in $\mathfrak{H}^2$. Here, the maximal existence time has lower bound depending only on the $H^s$-norm of initial data for $\frac{3}{4}<s\leq 1$.
\end{lemma}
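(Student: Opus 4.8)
The plan is to deduce the $\mathfrak{H}^2$ well-posedness from Theorem 4.1 by a persistence-of-regularity argument in the spirit of \cite[Theorem 1.2.5]{Ca}. Fix $s\in(\tfrac34,1]$. Since $\mathfrak{H}^2\subset\mathfrak{H}^1\sim H^1\hookrightarrow H^s$ by the norm equivalence (Lemma 3.2), any $u_0\in\mathfrak{H}^2$ lies in $H^s$, and Theorem 4.1 yields a unique solution $u\in C(I;H^s)\cap\mathfrak{S}^s(I)$ on $I=[0,T]$ with $T=T(\|u_0\|_{H^s})$. What remains is to upgrade $u$ to a solution in $C(I;\mathfrak{H}^2)\cap C^1(I;L^2)$ and to verify continuous dependence in $\mathfrak{H}^2$. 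The structural obstacle is that $\mathfrak{H}^2$, the domain of $\mathcal H$, is in general not $H^2$ (Remark 2.3), is not a Banach algebra, and is not contained in $L^\infty$; hence $|u|^2u$ need not lie in $\mathfrak{H}^2$, and one cannot simply rerun the contraction of Theorem 4.1 in $\mathfrak{H}^2$. The remedy is to extract all the regularity through the time derivative together with the identity $\mathcal H u=i\partial_t u\pm|u|^2u$ obtained by rewriting $\textup{NLS}_V$.

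As a preliminary step I would upgrade $u$ to $C(I;H^1)$. This is itself a persistence statement (from $H^s$ to $H^1$ with $u_0\in H^1$), proved by rerunning the fixed-point map of Theorem 4.1 in $\mathfrak{S}^s(I')\cap\mathfrak{S}^1(I')$ on short subintervals: using the norm equivalence to replace $\mathfrak W^{1,r}$ by $W^{1,r}$, the fractional Leibniz rule, and the Sobolev inequality, one gets $\||u|^2u\|_{L^2_t\mathfrak W^{1,6/5}_x}\lesssim |I'|^{\delta}\,\|u\|_{\mathfrak S^s(I)}^2\,\|u\|_{\mathfrak S^1(I')}$ with $\delta=\delta(s)>0$, the positivity of the gain being exactly where $s>\tfrac34$ is used (cubic NLS in three dimensions is $\dot H^{1/2}$-critical, so placing two of the three factors at $H^s$-regularity leaves room precisely when $s>\tfrac34$). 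Since $\|u\|_{\mathfrak S^s(I)}<\infty$ one iterates over finitely many such subintervals to cover $I$, and invoking uniqueness in $\mathfrak S^s$ one concludes $u\in C(I;H^1)\hookrightarrow C(I;L^6)$, so that $t\mapsto(|u|^2u)(t)\in C(I;L^2)$.

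Next, because $u_0\in\mathfrak{H}^2$ (so $\mathcal H u_0\in L^2$) and $\mathfrak{H}^2\hookrightarrow L^6$, the element $v_0:=-i\mathcal H u_0\pm i|u_0|^2u_0$ belongs to $L^2$; it is the formal value of $\partial_t u(0)$. Differentiating $\textup{NLS}_V$ in time suggests the linear Schr\"odinger equation
$$iv_t+\Delta v-Vv\pm(2|u|^2v+u^2\bar v)=0,\qquad v(0)=v_0,$$
with $u$ the solution above, which I would solve by a Duhamel contraction at the $L^2$-Strichartz level, where Lemma 3.4 is available on the full admissible range, including the endpoint pair. The zero-order forcing obeys $\|2|u|^2v+u^2\bar v\|_{L^2_t\mathfrak W^{0,6/5}_x}\lesssim |I'|^{\delta}\|u\|_{\mathfrak S^s(I)}^2\|v\|_{\mathfrak S^0(I')}$ on a subinterval $I'$ for some $\delta=\delta(s)>0$ (place $v$ on an admissible pair with spatial exponent below $6$, distribute the two factors of $u$ via $L^\infty_tH^s_x\hookrightarrow L^\infty_tL^p_x$, and gain a power of $|I'|$ in time), while the finitely many bound-state contributions are handled by an $L^1_tL^{1+}_x$ bound on the forcing exactly as in the proof of Theorem 4.1. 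Since the equation for $v$ is linear, iterating over finitely many subintervals of $I$ gives $v\in C(I;L^2)\cap\mathfrak S^0(I)$.

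Finally I would identify $v$ with $\partial_t u$ and read off the $\mathfrak{H}^2$-regularity. Forming the difference quotients $D_hu(t):=h^{-1}(u(t+h)-u(t))$ and subtracting from the Duhamel formula, $D_hu$ solves the same linear equation as $v$ up to a forcing error tending to $0$ in $L^2_t\mathfrak W^{0,6/5}_x$ (by the $L^6$-continuity of $u$ from the second paragraph together with dominated convergence) and with data $h^{-1}(u(h)-u_0)\to v_0$ in $L^2$ (the linear part converges because $u_0$ lies in the domain of $\mathcal H$, the Duhamel part because $t\mapsto(|u|^2u)(t)$ is $L^2$-continuous); continuous dependence for the linear equation then gives $D_hu\to v$, so $u\in C^1(I;L^2)$ with $\partial_t u=v$ — this is the conclusion packaged abstractly by \cite[Theorem 1.2.5]{Ca}. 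Then $\mathcal H u=i\partial_t u\pm|u|^2u=iv\pm|u|^2u\in C(I;L^2)$, hence $u(t)\in\mathfrak{H}^2$ for every $t$ and $u\in C(I;\mathfrak{H}^2)$; uniqueness is inherited from $C(I;H^s)$, and continuous dependence in $\mathfrak{H}^2$ follows by running the same scheme on the difference $u-\tilde u$ of two solutions (with the extra forcing terms $(|u|^2-|\tilde u|^2)\tilde v$, etc., absorbed by the already-controlled norms). I expect the main difficulty to be organizing the whole argument around the time derivative — forced on us by $\mathfrak{H}^2\neq H^2$, which prevents pushing spatial derivatives through $|u|^2u$ in the distorted space — and checking that the threshold $s>\tfrac34$ is exactly what lets both the $H^1$-persistence and the $L^2$-level contraction for $\partial_t u$ close with a positive power of the time interval.
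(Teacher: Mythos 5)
Your scheme is correct in substance, but it is a genuinely different proof from the one in the paper, and your opening claim that ``one cannot simply rerun the contraction of Theorem 4.1 in $\mathfrak{H}^2$'' is precisely what the paper does. The paper's proof runs the fixed-point map $\Phi_{u_0}$ on the set $\tilde{\mathfrak{X}}_{a,b}(I)=\{\|u\|_{L^\infty_t\mathfrak{H}^2_x}\le a,\ \|u\|_{\mathfrak{S}^s(I)}\le b\}$ equipped only with the weaker $\mathfrak{S}^s(I)$ metric; the completeness of this set is exactly where \cite[Theorem 1.2.5]{Ca} enters (Lemma 4.3), via weak lower semicontinuity of the $L^\infty_t\mathfrak{H}^2_x$ bound, not via a persistence argument. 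The step you judged unavailable --- pushing the operator through the nonlinearity --- is carried out by writing $(1+a+\mathcal{H})=(1+a-\Delta)+V$, expanding $(1+a-\Delta)(|u|^2u)$ by the Leibniz rule into $|u|^2(1+a-\Delta)u$, $|\nabla u|^2|u|$ and $|u|^3$, converting back to $(1+a+\mathcal{H})u$ at the cost of an extra $V|u|^3$, and controlling the $V|u|^3$ terms with $V\in L^{3/2,\infty}$ and Lorentz--H\"older; the $|\nabla u|^2|u|$ term requires a fractional integration by parts. The estimate is linear in $\|u\|_{L^\infty_t\mathfrak{H}^2_x}$ with two factors at $\mathfrak{S}^s$ regularity, which is why $T$ depends only on $\|u_0\|_{H^s}$. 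Your route is instead the classical Kato--Cazenave one: solve in $H^s$, upgrade to $C(I;H^1)$, solve the time-differentiated linear equation for $v=\partial_t u$ at the $L^2$ Strichartz level, identify $v=\partial_t u$ by difference quotients, and read off $\mathcal{H}u=i\partial_t u\pm|u|^2u\in C(I;L^2)$. What your approach buys is that you never need to make sense of $(1+a+\mathcal{H})(|u|^2u)$, you avoid the completeness lemma entirely, and the bound-state projections cause no new difficulty since the $v$-equation is linear; the price is the extra layers ($H^1$ persistence, the difference-quotient identification, and a separate argument for continuous dependence in $\mathfrak{H}^2$, which you only sketch). Two small corrections: the threshold $s>\frac{3}{4}$ is not actually forced by your scheme --- your $H^1$-persistence and the $L^2$-level contraction for $v$ both close with a positive power of $T$ for any $s>\frac{1}{2}$; in the paper it is the specific estimates of the terms $B$ and $C$ (which use spatial regularity $\frac{3+10\epsilon}{4}\le s$ and $\frac{3}{4}+\le s$) that pin down $s>\frac{3}{4}$. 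Also note that the paper's Proposition 4.5 subsequently reuses exactly the time-differentiation computation you propose here in order to justify the conservation laws, so your argument is in effect a reorganization that front-loads that step.
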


For the proof, we need the following lemma.
\begin{lemma} In the situation of Lemma 4.2, $\tilde{\mathfrak{X}}_{a,b}(I)=\{u: \|u\|_{L_{t\in I}^\infty \mathfrak{H}_x^2}\leq a\textup{ and }\|u\|_{\mathfrak{S}^s(I)}\leq  b\}$ is complete with respect to the $\mathfrak{S}^s(I)$-norm, where $I$ is a finite interval.
\end{lemma}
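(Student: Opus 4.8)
The plan is to show that $\tilde{\mathfrak{X}}_{a,b}(I)$, equipped with the metric $d(u,v) = \|u-v\|_{\mathfrak{S}^s(I)}$, is a complete metric space. Since $\mathfrak{S}^s(I)$ is itself a norm coming from a Banach space (a finite supremum over $L^q_t\mathfrak{W}^{s,r}_x$ spaces, each of which is complete), any Cauchy sequence $(u_n)$ in $\tilde{\mathfrak{X}}_{a,b}(I)$ converges in $\mathfrak{S}^s(I)$ to some limit $u$. The only thing to verify is that this limit $u$ again lies in $\tilde{\mathfrak{X}}_{a,b}(I)$, i.e., that the two defining constraints are preserved under $\mathfrak{S}^s(I)$-limits. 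The constraint $\|u\|_{\mathfrak{S}^s(I)}\le b$ is immediate, since the $\mathfrak{S}^s(I)$-norm is continuous with respect to $\mathfrak{S}^s(I)$-convergence. So the heart of the matter is showing that the bound $\|u\|_{L^\infty_{t\in I}\mathfrak{H}^2_x}\le a$ passes to the limit even though we only have convergence in the weaker $\mathfrak{S}^s(I)$-topology.

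First I would fix the inclusion $\mathfrak{S}^s(I)\hookrightarrow L^\infty_{t\in I}\mathfrak{H}^s_x$ (taking the endpoint admissible pair $(q,r)=(\infty,2)$), so $u_n\to u$ in $L^\infty_t\mathfrak{H}^s_x$; in particular, after passing to a subsequence, $u_n(t)\to u(t)$ in $\mathfrak{H}^s$ for a.e.\ $t\in I$, and hence in the sense of tempered distributions for a.e.\ $t$. Next, for each fixed $t$, the sequence $(u_n(t))$ is bounded in $\mathfrak{H}^2$ by $a$; since $\mathfrak{H}^2$ is a Hilbert space (Remark 2.3(ii)), bounded sets are weakly sequentially precompact, so along a further subsequence $u_n(t)\rightharpoonup w(t)$ weakly in $\mathfrak{H}^2$ with $\|w(t)\|_{\mathfrak{H}^2}\le \liminf_n\|u_n(t)\|_{\mathfrak{H}^2}\le a$ by weak lower semicontinuity of the norm. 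Since weak $\mathfrak{H}^2$-convergence implies distributional convergence and distributional limits are unique, $w(t)=u(t)$; therefore $\|u(t)\|_{\mathfrak{H}^2}\le a$ for a.e.\ $t\in I$, which is exactly $\|u\|_{L^\infty_{t\in I}\mathfrak{H}^2_x}\le a$. A cleaner way to package this, avoiding the $t$-by-$t$ subsequence juggling, is to note that the constraint set $\{v\in\mathfrak{H}^2:\|v\|_{\mathfrak{H}^2}\le a\}$ is convex and closed in $\mathfrak{H}^s$ (it is the intersection of half-spaces, or equivalently closed under $\mathfrak{H}^s$-limits by the weak-compactness argument above applied once), hence the set of $u\in L^\infty_t\mathfrak{H}^s_x$ with $u(t)$ in this constraint set for a.e.\ $t$ is closed in $L^\infty_t\mathfrak{H}^s_x$, and therefore closed in $\mathfrak{S}^s(I)$.

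The main obstacle — really the only subtlety — is that completeness is being asserted with respect to a norm strictly weaker than the one controlling the $\mathfrak{H}^2$-bound, so one cannot simply say "closed subset of a Banach space." The resolution is the standard trick used in this kind of contraction-mapping setup (cf.\ Cazenave): the extra bound defines a set that is merely \emph{closed} (not a subspace) in the topology in which we take limits, and closedness is exactly what is needed for completeness of the induced metric space. One must be slightly careful that the a.e.-defined pointwise bound is genuinely what $\|\cdot\|_{L^\infty_{t\in I}\mathfrak{H}^2_x}$ encodes, and that passing to subsequences is harmless since a Cauchy sequence with a convergent subsequence converges. No harder analysis (no Strichartz estimates, no spectral theory beyond the Hilbert-space structure of $\mathfrak{H}^2$) is needed here.
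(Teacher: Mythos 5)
Your proof is correct and is essentially the paper's argument: the paper simply cites \cite[Theorem 1.2.5]{Ca} with $X=\mathfrak{H}^2$ (reflexive by Remark 2.3) and $Y=\mathfrak{W}^{s,r}$ to transfer the $L^\infty_t\mathfrak{H}^2_x$ bound to the limit, whereas you reprove that special case inline via weak compactness of balls in the Hilbert space $\mathfrak{H}^2$, weak lower semicontinuity of the norm, and identification of the weak limit with the $\mathfrak{S}^s(I)$-limit through distributional uniqueness. The only cosmetic slip is calling the supremum defining $\|\cdot\|_{\mathfrak{S}^s(I)}$ ``finite'' (it ranges over a continuum of admissible pairs), which does not affect the argument.
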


\begin{proof}
Recall \cite[Theorem 1.2.5]{Ca}: ``Consider two Banach spaces $X\hookrightarrow Y$ and $1<p,q\leq\infty$. Let $\{f_n\}_{n=1}^\infty$ be a bounded sequence in $L^q(Y)$ and let $f: I\to Y$ be such that $f_n(t)\rightharpoonup f(t)$ in $Y$ as $n\to \infty$ for a.e. $t\in I$. If $\{f_n\}_{n=1}^\infty$ is bounded in $L^p(I;X)$ and if $X$ is reflexive, then $f\in L^p(I; X)$ and $\|f\|_{L^p(I; X)}\leq\underset{n\to\infty}\liminf\|f_n\|_{L^p(I; X)}$." Fix an admissible pair $(q,r)$, and let $X=\mathfrak{H}^2$ and $Y=\mathfrak{W}^{s,r}$. Then, by the Sobolev inequality and Remark 2.4, $X$ and $Y$ satisfies the assumptions in \cite[Theorem 1.2.5]{Ca}. Let $\{u_n\}_{n=1}^\infty$ be a Cauchy sequence in $\tilde{\mathfrak{X}}_{a,b}(I)$. Then, $u_n\to u$ in $L_{t\in I}^q \mathfrak{W}_x^{s,r}$. Hence, it follows from \cite[Theorem 1.2.5]{Ca} that $\|u\|_{L_{t\in I}^\infty \mathfrak{H}_x^2}\leq \underset{n\to\infty}\liminf\|u_n\|_{L_{t\in I}^\infty \mathfrak{H}_x^2}\leq a$, and thus $u\in\tilde{\mathfrak{X}}_{a,b}(I)$. 
\end{proof}

\begin{proof}[Proof of Lemma 4.2]
We only give a sketch of the proof, since it is similar to that of Theorem 4.1. Let $\epsilon=\frac{4s-3}{10}$ $(\Rightarrow0<\epsilon\leq\frac{1}{10})$. For $u_0\in \mathfrak{H}^2$, let $\tilde{\mathfrak{X}}=\tilde{\mathfrak{X}}_{a,b}(I)$ with $a=2\|u_0\|_{\mathfrak{H}^2}$ and $b=2\|u_0\|_{\mathfrak{H}^s}$, where $I=[0,T]$ is a short time interval to be chosen later. We want to show that $\Phi_{u_0}(u)$, given by $(4.1)$, is a contraction on $\tilde{\mathfrak{X}}$.

Consider
$$\int_0^t e^{-i(t-s)\mathcal{H}}P_c(|u|^2u)(s)ds.$$
First, applying the Strichartz estimates, we get 
$$\Big\|\int_0^t e^{-i(t-s)\mathcal{H}}P_c(|u|^2u)(s)ds\Big\|_{L_t^\infty \mathfrak{H}_x^2}\lesssim\|(1+a+\mathcal{H})(|u|^2u)\|_{L_t^2 L_x^{6/5}}.$$
Since $(1+a+\mathcal{H})(|u|^2u)$ is bounded by
\begin{align*}
&|(1+a-\Delta)(|u|^2u)|+|V||u|^3\\
&\lesssim |u|^2|(1+a-\Delta)u|+V|u|^3+|\nabla u|^2|u|+|u|^3\\
&\lesssim |u|^2|(1+a+\mathcal{H})u|+V|u|^3+|\nabla u|^2|u|+|u|^3,
\end{align*}
we write
\begin{align*}
\Big\|\int_0^t e^{-i(t-s)\mathcal{H}}P_c(|u|^2u)(s)ds\Big\|_{L_t^\infty \mathfrak{H}_x^2}&\leq\|u\|_{L_t^4L_x^6}^2\|u\|_{L_t^\infty\mathfrak{H}_x^2}+\|V\|_{L^{3/2,\infty}}\||u|^3\|_{L_t^2L_x^{6,6/5}}\\
&+\||\nabla u|^2|u|\|_{L_t^2 L_x^{6/5}}+\||u|^3\|_{L_t^2 L_x^{6/5}}=A+B+C+D.
\end{align*}
We estimate each term by the Sobolev inequality, the norm equivalence and the definition of the $\mathfrak{S}^s(I)$-norm:
\begin{align*}
A&\lesssim \|u\|_{L_t^4\mathfrak{W}_x^{\frac{1}{2}+,3-}}^2\|u\|_{L_t^\infty\mathfrak{H}_x^2}\lesssim T^{0+}\|u\|_{L_t^{4+}\mathfrak{W}_x^{\frac{1}{2}+,3-}}^2\|u\|_{L_t^\infty\mathfrak{H}_x^2}\leq T^{0+}\|u\|_{\mathfrak{S}^s(I)}^2\|u\|_{\mathfrak{S}^2(I)},\\
B&\lesssim\|u\|_{L_t^4L_x^{\frac{12}{1-6\epsilon}}}^2\|u\|_{L_t^\infty L_x^{\frac{1}{\epsilon},\frac{3}{2+3\epsilon}}}\lesssim T^\epsilon\|u\|_{L_t^{\frac{4}{1-2\epsilon}}\mathfrak{W}_x^{\frac{3+10\epsilon}{4},\frac{3}{1+\epsilon}}}^2\|u\|_{L_t^\infty \mathfrak{H}_x^2}\leq T^\epsilon\|u\|_{\mathfrak{S}^s(I)}^2\|u\|_{\mathfrak{S}^2(I)},\\
C&\leq \|u\|_{L_t^4 L_x^{12}}\||\nabla u|^2\|_{L_t^4 L_x^{4/3}}\lesssim\|u\|_{L_t^4 W_x^{\frac{3}{4}+, 3-}}\|u\|_{L_t^4 W_x^{\frac{3}{4}+,3}}\|u\|_{L_t^\infty W_x^{\frac{5}{4}-,\frac{12}{5}}}\\
&\lesssim T^{0+}\|u\|_{L_t^{4+} \mathfrak{W}_x^{\frac{3}{4}+, 3-}}\|u\|_{L_t^4 \mathfrak{W}_x^{\frac{3}{4}+,3}}\|u\|_{L_t^\infty \mathfrak{W}_x^{\frac{5}{4}-,\frac{12}{5}}}\leq T^{0+}\|u\|_{\mathfrak{S}^s(I)}^2\|u\|_{\mathfrak{S}^2(I)},\\
D&\leq\|u\|_{L_t^6 L_x^3}^2\|u\|_{L_t^6 L_x^6}\lesssim\|u\|_{L_t^6 \mathfrak{H}_x^s}^2T^{\frac{1}{6}}\|u\|_{L_t^\infty \mathfrak{H}_x^2}\leq T^{\frac{1}{6}}\|u\|_{\mathfrak{S}^s(I)}^2\|u\|_{\mathfrak{S}^2(I)}.
\end{align*}
In the second inequality for $C$, we used the fractional integration by parts:
$$\|fg\|_{L^p}\lesssim\||\nabla|^s f\|_{L^{q_1}}\||\nabla|^{-s}g\|_{L^{r_1}}+\||\nabla|^{-s}f\|_{L^{q_2}}\||\nabla|^sg\|_{L^{r_2}}$$
where $\frac{1}{p}=\frac{1}{q_1}+\frac{1}{r_1}=\frac{1}{q_2}+\frac{1}{r_2}$. This can be proved by a modification of the proof of the fractional Leibniz rule, see \cite{ChWe} for example. Thus, we get 
$$\Big\|\int_0^t e^{-i(t-s)\mathcal{H}}P_c(|u|^2u)(s)ds\Big\|_{L_t^\infty \mathfrak{H}_x^2}\lesssim T^{0+}\|u\|_{\mathfrak{S}^s(I)}^2\|u\|_{\mathfrak{S}^2(I)}\lesssim T^{0+}\|u_0\|_{\mathfrak{H}^s}^2\|u_0\|_{\mathfrak{H}^2}.$$
We estimate other terms in $\Phi_{u_0}$ as we did in Theorem 4.1. Collecting all, we prove that 
$$\|\Phi_{u_0}(u)\|_{L_t^\infty \mathfrak{H}_x^2}\leq\|u_0\|_{\mathfrak{H}_x^2}+cT^{0+} \|u_0\|_{\mathfrak{H}^s}^2\|u_0\|_{\mathfrak{H}_x^2}\textup{ for all }u\in\tilde{\mathfrak{X}}.$$
By choosing small $T$ depending only on $\|u_0\|_{\mathfrak{H}^s}\sim\|u_0\|_{H^s}$ together with $(4.2)$ and $(4.3)$, we conclude that $\Phi_{u_0}$ is contractive on $\tilde{\mathfrak{X}}$.
\end{proof}

\begin{corollary}[Persistence of regularity] In the situation of Lemma 4.2, we have the following properties: for a sequence of initial data $\{u_{n,0}\}_{n=1}^\infty\subset\mathfrak{H}^2$ with $u_{n,0}\to u_0$ in $H^s$, we denote by $\{u_n\}_{n=1}^\infty$ and $u$ corresponding solutions. Then, $\|u_n-u\|_{C_{t\in I}^0 H_x^s}\to 0$, where $I$ is a uniform interval given by Lemma 4.2, depending only on $\|u_0\|_{H^s}$.
\end{corollary}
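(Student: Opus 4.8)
The plan is to run the continuous-dependence argument built into the contraction scheme, the essential point being that Lemma 4.2 gives an existence time controlled purely by the $H^s$-norm of the data, so that a common existence interval and a common size of the solutions are available uniformly in $n$. First I would observe that, since $u_{n,0}\to u_0$ in $H^s$ and $\|\cdot\|_{\mathfrak{H}^s}\sim\|\cdot\|_{H^s}$, after discarding finitely many indices we may assume $\|u_{n,0}\|_{\mathfrak{H}^s}\le 2\|u_0\|_{\mathfrak{H}^s}$ for all $n$. Applying Lemma 4.2 to each $u_{n,0}$ and Theorem 4.1 to $u_0$ then produces a single interval $I=[0,T]$ with $T=T(\|u_0\|_{H^s})>0$ on which $u$ and all the $u_n$ are defined and satisfy $\|u\|_{\mathfrak{S}^s(I)},\ \|u_n\|_{\mathfrak{S}^s(I)}\lesssim\|u_0\|_{\mathfrak{H}^s}$; shrinking $T$ by a harmless factor, still depending only on $\|u_0\|_{H^s}$, we may in addition assume $cT^{(2s-1)/2}(\|u_n\|_{\mathfrak{S}^s(I)}^2+\|u\|_{\mathfrak{S}^s(I)}^2)\le\frac12$ for every $n$, where $c$ is the constant in $(4.3)$.

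Next I would estimate the difference via $u_n-u=\big[\Phi_{u_{n,0}}(u_n)-\Phi_{u_{n,0}}(u)\big]+\big[\Phi_{u_{n,0}}(u)-\Phi_{u_0}(u)\big]$. The first bracket is bounded exactly as in $(4.3)$, with $u_n,u$ in place of $u,v$; the second bracket equals $e^{-it\mathcal{H}}P_c(u_{n,0}-u_0)+\sum_{j=1}^J e^{-it\lambda_j}\la u_{n,0}-u_0,\psi_j\ra_{L^2}\psi_j$ and is $\lesssim\|u_{n,0}-u_0\|_{\mathfrak{H}^s}$ by the Strichartz estimates together with the bounds $\|\psi_j\|_{\mathfrak{S}^s(I)}\lesssim\|\psi_j\|_{\mathfrak{H}^2}$ already used in Theorem 4.1. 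Hence
$$\|u_n-u\|_{\mathfrak{S}^s(I)}\le c\|u_{n,0}-u_0\|_{\mathfrak{H}^s}+cT^{\frac{2s-1}{2}}\big(\|u_n\|_{\mathfrak{S}^s(I)}^2+\|u\|_{\mathfrak{S}^s(I)}^2\big)\|u_n-u\|_{\mathfrak{S}^s(I)},$$
and by the choice of $T$ the last term is absorbed into the left-hand side, giving $\|u_n-u\|_{\mathfrak{S}^s(I)}\le 2c\|u_{n,0}-u_0\|_{\mathfrak{H}^s}$.

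To conclude, note that $(q,r)=(\infty,2)$ is $H^s$-admissible (as $s\le1<\frac32$), so the $\mathfrak{S}^s(I)$-norm dominates the $L^\infty_{t\in I}\mathfrak{H}^s_x$-norm; by norm equivalence, $\|u_n-u\|_{L^\infty_{t\in I}H^s_x}\lesssim\|u_n-u\|_{\mathfrak{S}^s(I)}\lesssim\|u_{n,0}-u_0\|_{H^s}\to0$. Since $u$ and each $u_n$ lie in $C^0_{t\in I}H^s_x$ — because the Duhamel map $\Phi$ maps into $C^0_tH^s_x$: $t\mapsto e^{-it\mathcal{H}}w$ is strongly continuous on $\mathfrak{H}^s$, and the Duhamel integral and the finite eigenfunction sums are continuous in $t$ — the difference $u_n-u$ is in $C^0_{t\in I}H^s_x$, and its $C^0_{t\in I}H^s_x$-norm equals the $L^\infty_{t\in I}H^s_x$-norm just estimated, which tends to $0$.

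The only step requiring real care is the first one: the data converge only in $H^s$, not in $\mathfrak{H}^2$, so the $\mathfrak{H}^2$-norms of the $u_{n,0}$ need not even be bounded, and it is precisely the uniform-in-$\|u_0\|_{H^s}$ lower bound on the existence time furnished by Lemma 4.2 that secures the common interval $I$ and the common bound on $\|u_n\|_{\mathfrak{S}^s(I)}$. Everything past that is the routine absorption argument already carried out in Theorem 4.1.
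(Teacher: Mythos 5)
Your proposal is correct and follows essentially the same route as the paper: use the $H^s$-only lower bound on the existence time from Lemma 4.2 to place all the $u_n$ and $u$ on one interval $I$, then invoke continuous dependence on the data in $H^s$. The paper simply cites the $H^s$-continuity of the data-to-solution map already contained in Theorem 4.1, whereas you spell out that continuity via the standard splitting and absorption argument; the content is the same.
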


\begin{proof}
By Lemma 4.2, there exists a uniform time interval $I$ on which all $u_n$ exists in $\mathfrak{H}_x^2$. Then the corollary follows from the $H^s$-continuity of the data-to-solution map in Theorem 4.1.
\end{proof}

\begin{proposition}[Conservation laws] In the situation of Theorem 4.1 with $s=1$, solutions conserve the mass and the energy.
\end{proposition}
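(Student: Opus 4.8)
The computation $(4.4)$ is purely formal because $\mathcal{H}u$ is undefined for an $H^1$-solution; the plan is to justify it rigorously for the smoother solutions produced by Lemma 4.2 and then transfer the identities to the $H^1$-solution via persistence of regularity. Write $\langle f,g\rangle:=\int_{\mathbb{R}^3}f\bar g\,dx$, so that $q(u,v)=\langle u,\mathcal{H}v\rangle$ and, by self-adjointness of $\mathcal{H}$ (Lemma 2.3), $q(v,w)=\langle\mathcal{H}v,w\rangle$ for $v,w\in\mathfrak{H}^2=D(\mathcal{H})$. Let $u\in C^0_{t\in I}H^1_x$ be the solution of $\textup{NLS}_V$ with data $u_0$ from Theorem 4.1 ($s=1$). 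Since $\mathfrak{H}^2$ is dense in $H^1$ (Remark 2.3$(i)$), choose $u_{n,0}\in\mathfrak{H}^2$ with $u_{n,0}\to u_0$ in $H^1$; as $\sup_n\|u_{n,0}\|_{H^1}<\infty$ and the existence time in Lemma 4.2 is bounded below in terms of the $H^1$-norm of the data, the corresponding solutions $u_n$, which lie in $C^0_t\mathfrak{H}^2$, all exist on a common interval, which after shrinking $I$ I take to be $I$. Standard semigroup theory then gives $u_n\in C^1_tL^2_x$ with $i\,\partial_t u_n=\mathcal{H}u_n\mp|u_n|^2u_n$ in $C^0_{t\in I}L^2_x$; indeed $u_{n,0}\in D(\mathcal{H})$, $\mathcal{H}u_n\in C^0_tL^2_x$ since $u_n(t)\in D(\mathcal{H})$, and $|u_n|^2u_n\in C^0_tL^2_x$ since $u_n\in C^0_t\mathfrak{H}^2\hookrightarrow C^0_tH^1\hookrightarrow C^0_tL^6$.

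Next I would run $(4.4)$ rigorously for each $u_n$. Using $u_n(t+h)-u_n(t)=\int_t^{t+h}\partial_\tau u_n\,d\tau$ (a Bochner integral in $L^2$), expanding $M[u_n(t+h)]-M[u_n(t)]$ and dividing by $h$ gives $\tfrac{d}{dt}M[u_n]=2\Re\langle\partial_t u_n,u_n\rangle$; since $\langle\mathcal{H}u_n,u_n\rangle\in\mathbb{R}$ and $\langle|u_n|^2u_n,u_n\rangle=\|u_n\|_{L^4}^4\in\mathbb{R}$, substituting the equation yields $\tfrac{d}{dt}M[u_n]=0$. For the energy, with $E[u_n]=\tfrac12 q(u_n,u_n)\mp\tfrac14\|u_n\|_{L^4}^4$ and $w=u_n(t+h)-u_n(t)\in D(\mathcal{H})$, one expands
\[
q(u_n(t+h),u_n(t+h))-q(u_n(t),u_n(t))=\langle w,\mathcal{H}u_n(t+h)\rangle+\langle\mathcal{H}u_n(t),w\rangle ,
\]
and, dividing by $h$ and using continuity of $t\mapsto\mathcal{H}u_n(t)$ in $L^2$, obtains $\tfrac{d}{dt}q(u_n,u_n)=2\Re\langle\mathcal{H}u_n,\partial_t u_n\rangle$ and likewise $\tfrac{d}{dt}\|u_n\|_{L^4}^4=4\Re\langle|u_n|^2u_n,\partial_t u_n\rangle$. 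Hence $\tfrac{d}{dt}E[u_n]=\Re\langle\mathcal{H}u_n\mp|u_n|^2u_n,\partial_t u_n\rangle=\Re\langle i\,\partial_t u_n,\partial_t u_n\rangle=0$, so $M[u_n(t)]=M[u_{n,0}]$ and $E[u_n(t)]=E[u_{n,0}]$ for all $t\in I$.

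Finally I would pass to the limit. By Corollary 4.3 (with $s=1$), $u_n\to u$ in $C^0_{t\in I}H^1_x$. The functional $M$ is continuous on $L^2$, and $E$ is continuous on $H^1$: the kinetic and quartic terms are continuous on $H^1$ by $H^1\hookrightarrow L^4$, and for the potential term, Cauchy--Schwarz in the measure $|V|\,dx$ together with Remark 2.2 gives
\[
|\langle Vu,v\rangle|\le\Big(\int_{\mathbb{R}^3}|V|\,|u|^2\,dx\Big)^{1/2}\Big(\int_{\mathbb{R}^3}|V|\,|v|^2\,dx\Big)^{1/2}\lesssim\|V\|_{\mathcal{K}}\,\|\nabla u\|_{L^2}\|\nabla v\|_{L^2},
\]
so $\langle Vu_n,u_n\rangle\to\langle Vu,u\rangle$. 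Letting $n\to\infty$ in the identities of the previous step gives $M[u(t)]=\lim_n M[u_{n,0}]=M[u_0]$ and $E[u(t)]=E[u_0]$ for all $t\in I$. Since on any compact subinterval of the maximal interval of existence the $H^1$-norm stays bounded, that subinterval is covered by finitely many steps of the above type — each starting from $u(t_0)\in H^1$ — which propagates both conservation laws to the entire interval of existence.

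The step I expect to be the main obstacle is the rigorous form of $(4.4)$ for the approximants: one must know that Lemma 4.2 actually yields $u_n\in C^0_t\mathfrak{H}^2$ (rather than merely $L^\infty_t\mathfrak{H}^2$), so that the equation holds pointwise in $L^2$ and $u_n\in C^1_tL^2$, and then verify that $t\mapsto M[u_n(t)]$ and $t\mapsto E[u_n(t)]$ are differentiable with exactly the derivatives predicted by $(4.4)$ — the only subtle point there being the use of self-adjointness of $\mathcal{H}$ to move it between the two slots of the inner product. Once this is in place, the limiting step is soft.
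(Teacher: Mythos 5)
Your proposal is correct and follows essentially the same route as the paper: reduce to $\mathfrak{H}^2$-solutions via persistence of regularity and the $H^1$-continuity of the mass and energy functionals, then justify $(4.4)$ rigorously for $\mathfrak{H}^2$-solutions by showing they are $C^1$ in time with values in $L^2$ (the paper does this by differentiating the Duhamel integral directly, using that it takes values in $\mathfrak{H}^2=D(\mathcal{H})$ — exactly the semigroup fact you invoke). The extra details you supply (the difference-quotient computation for $q(u,u)$ using self-adjointness, the continuity of the potential term of $E$ via Remark 2.2, and the continuation over the maximal interval) are consistent with, and flesh out, the paper's argument.
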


\begin{proof}
We only show the energy conservation. The mass conservation can be proved similarly. Since the energy is continuous in $H^1$, by persistence of regularity, it suffices to verify that $\mathfrak{H}^2$-solutions conserve the energy. Suppose that $u(t)\in C_{t\in I}^0\mathfrak{H}_x^2$ solves the integral equation
$$u(t)=e^{-it\mathcal{H}}u_0\pm i\int_0^t e^{-i(t-s)\mathcal{H}}(|u|^2u)(s)ds.$$
Then $u(t)$ is time-differentiable in $L^2$. Indeed, by the estimates in the proof of Theorem 4.1 and Lemma 4.2, one can show that
$$\|(|u|^2u)(t)\|_{C_{t\in I}L_x^2}<\infty,\ \Big\|\int_0^t e^{is\mathcal{H}}(|u|^2u)(s)ds\Big\|_{\mathfrak{H}_x^2}<\infty.$$
Therefore the nonlinear term is differentiable in $L^2$:
\begin{align*}
&\frac{1}{\epsilon}\Big[\int_0^{t+\epsilon}e^{-i(t+\epsilon-s)\mathcal{H}}(|u|^2u)(s)ds-\int_0^te^{-i(t-s)\mathcal{H}}(|u|^2u)(s)ds\Big]\\
&-(|u|^2u)(t)+i\mathcal{H}\int_0^te^{-i(t-s)\mathcal{H}}(|u|^2u)(s)ds\\
&=\Big[\frac{1}{\epsilon}\int_t^{t+\epsilon}e^{-i(t+\epsilon-s)\mathcal{H}}(|u|^2u)(s)ds-(|u|^2u)(t)\Big]\\
&+\Big[\frac{1}{\epsilon}(e^{-i(t+\epsilon)\mathcal{H}}-e^{-it\mathcal{H}})+i\mathcal{H}e^{-it\mathcal{H}}\Big]\Big(\int_0^t e^{is\mathcal{H}}(|u|^2u)(s)ds\Big)\to 0\textup{ in }L^2\textup{ as }\epsilon\to0.
\end{align*}
Moreover, it is obvious that the linear term $e^{-it\mathcal{H}}u_0$ is differentiable. Thus, $u$ solves $iu_t-\mathcal{H}u\pm|u|^2u=0$ in $L_{t\in I}^\infty L_x^2$ and the formal calculation $(4.4)$ makes sense.
\end{proof}

Consider a 3d defocusing cubic nonlinear Schr\"odinger equation with a potential:
\begin{equation}\tag{$\textup{NLS}_V^-$}
iu_t+\Delta u-Vu-|u|^2u=0;\ u(0)=u_0.
\end{equation}
Recall that when $V=0$, the energy conservation law yields the global-in-time well-posedness (GWP) in $H^1$. We prove that the same is true in the presence of a potential:
\begin{theorem}[GWP] If $V\in \mathcal{K}_0\cap L^{3/2,\infty}$ and $\mathcal{H}$ has no resonance on $[0,+\infty)$, then $\textup{NLS}_V^-$ is globally-in-time well-posed in $H^1$.
\end{theorem}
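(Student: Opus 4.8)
The plan is the standard energy-subcritical globalization argument: feed the conservation laws into the local theory. Concretely, one combines Theorem 4.1 at $s=1$ (which produces, for every $H^1$ datum, a unique solution on a time interval whose length is bounded below by a positive quantity depending only on the $H^1$-norm of the datum, with continuous dependence) and Proposition 4.6 (conservation of mass and energy for $H^1$-solutions of $\textup{NLS}_V^-$), after establishing that mass and energy control the $H^1$-norm uniformly in time.

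First I would prove the a priori bound. Since $\textup{NLS}_V^-$ is defocusing, the quartic term in the energy carries the favorable sign, so for any $u\in H^1$ we have
$$2E[u]=\int_{\mathbb{R}^3}|\nabla u|^2+V|u|^2\,dx+\tfrac12\|u\|_{L^4}^4\geq q(u,u),$$
where $q$ is the quadratic form of Lemma 2.1. The lower form bound of Lemma 2.1 gives $q(u,u)\geq\frac12\|\nabla u\|_{L^2}^2-a\|u\|_{L^2}^2$, hence
$$\|\nabla u\|_{L^2}^2\leq 4E[u]+2aM[u].$$
Applying this to a solution $u(t)$ and invoking Proposition 4.6, we obtain $\|u(t)\|_{H^1}^2\leq 4E[u_0]+(2a+1)M[u_0]=:K(u_0)$ for every $t$ in the maximal interval of existence.

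Next I would run the usual continuation argument. By Theorem 4.1 with $s=1$, any datum with $H^1$-norm at most $\sqrt{K(u_0)}$ launches a solution on an interval of length at least some $\tau=\tau(K(u_0))>0$ (from the smallness condition $T\lesssim\|u_0\|_{H^1}^{-4}$ in the contraction estimate $(4.2)$). Starting from $u_0$ and iterating this local statement on consecutive intervals $[n\tau,(n+1)\tau]$ — which is legitimate precisely because the a priori bound keeps $\|u(n\tau)\|_{H^1}\leq\sqrt{K(u_0)}$ at every step — yields a solution on $[0,\infty)$; the same argument backward in time covers $(-\infty,0]$. Uniqueness and continuity of the data-to-solution map on each compact time interval are inherited directly from Theorem 4.1.

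The only step requiring any thought is the coercivity, and even there nothing goes wrong: although $V_-$ is not assumed small (so the form $q$ need not be nonnegative), Lemma 2.1 still bounds $q$ below by $\frac12\|\nabla u\|_{L^2}^2$ minus a fixed multiple of $\|u\|_{L^2}^2$, and that lost $L^2$-mass is itself conserved. Thus there is no genuine obstacle here; the theorem follows from the already-established local theory and conservation laws by a direct iteration.
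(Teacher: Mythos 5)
Your proof is correct and follows essentially the same route as the paper: an a priori $H^1$ bound from mass/energy conservation plus the favorable sign of the quartic term, followed by iteration of the local theory. The only cosmetic difference is that you extract coercivity directly from the lower form bound of Lemma 2.1, whereas the paper routes it through the norm equivalence $\|u\|_{H^1}\sim\|(1+a+\mathcal{H})^{1/2}u\|_{L^2}$ (the same underlying fact); you also spell out the continuation argument that the paper leaves implicit.
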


\begin{proof}
Let $u(t)$ be a solution. By the norm equivalence and conservation laws,
\begin{align*}
\|u(t)\|_{H^1}^2&\sim\|(1+a+\mathcal{H})^{1/2}u(t)\|_{L^2}^2=\|\nabla u(t)\|_{L^2}^2+\int_{\mathbb{R}^3}V|u(t)|^2dx+(1+a)\|u(t)\|_{L^2}^2\\
&\leq 2 E[u(t)]+(1+a)M[u(t)]=2 E[u_0]+(1+a)M[u_0]<\infty
\end{align*}
for during its existence time. Thus $u(t)$ is global in $H^1$.
\end{proof}

\appendix

\end{document}